\newtheorem{definition}{\bfseries Definition}
\newtheorem{example}{\bfseries Example}
\newtheorem{theorem}{\bfseries Theorem}
\newtheorem{corollary}{\bfseries Corollary}
\newtheorem{lemma}{\bfseries Lemma}
\newtheorem{remark}{\bfseries Remark}
\def\x{\bm{x}}
\def\u{\bm{u}}
\def\f{\bm{f}}
\def\g{\bm{g}}
\def\y{\bm{y}}
\newcommand{\mm}[1]{\mathcal{#1}}
\newcommand{\mt}[1]{\boldsymbol{#1}}
\newcommand{\li}{[\![}
\newcommand{\ri}{]\!]}
\newcommand{\bxi}{\bm{\xi}}
\newcommand{\z}{\bm{z}}
\newcommand{\cc}{\mathbf{c}}
\newcommand{\G}{\mathbf{G}}
\newcommand{\A}{\mathbf{A}}
\newcommand{\bb}{\mathbf{b}}
\newcommand{\Zc}{\mathcal{Z}}
\newcommand{\mat}[1]{\begin{bmatrix} #1 \end{bmatrix}}
\newcommand{\hz}[1]{\langle \G^c_{#1},\allowbreak \G^b_{#1},\allowbreak \cc_{#1},\allowbreak \A^c_{#1},\allowbreak \A^b_{#1},\allowbreak \bb_{#1} \rangle}
\newcommand{\overhz}[1]{\langle \overline{\G}^c_{#1},\allowbreak \overline{\G}^b_{#1},\allowbreak \overline{\cc}_{#1},\allowbreak \overline{\A}^c_{#1},\allowbreak \overline{\A}^b_{#1},\allowbreak \overline{\bb}_{#1} \rangle}
\newif\ifdraft
\title{\LARGE \bf
Hybrid Zonotope-Based Backward Reachability Analysis for Neural Feedback Systems With Nonlinear Plant Models}
\author{Hang Zhang, Yuhao Zhang and Xiangru Xu
\thanks{H. Zhang, Y. Zhang and X. Xu are with the Department of Mechanical Engineering, University of Wisconsin-Madison, Madison, WI, USA. Email:         {\tt\small \{hang.zhang,yuhao.zhang2,xiangru.xu\}@wisc.edu}.}%
}
\begin{document}
\maketitle
\begin{abstract}
The increasing prevalence of neural networks in safety-critical control systems underscores the imperative need for rigorous methods to ensure the reliability and safety of these systems. This work introduces a novel approach employing hybrid zonotopes to compute the over-approximation of backward reachable sets for neural feedback systems with nonlinear plant models and general activation functions.  
Closed-form expressions as hybrid zonotopes are provided for the over-approximated backward reachable sets, and a refinement procedure is proposed to alleviate the potential conservatism of the approximation.
Two numerical examples are provided to illustrate the effectiveness of the proposed approach.

\end{abstract}

\section{Introduction}\label{sec:intro}
The integration of Neural Networks (NNs) in feedback control systems necessitates the development of effective methods to analyze the system's behavior with robust and reliable performance guarantees. NNs demonstrate remarkable adaptability and are capable of learning intricate mappings from input to output, rendering them a potent tool for controlling nonlinear dynamical systems. Nevertheless, NNs are highly sensitive to even minor perturbations in the input space \cite{yuan2019adversarial}, potentially leading Neural Feedback Systems (NFSs), which are systems with NN controllers in the feedback loop, to safety hazards.
To analyze the behavior of such NFSs, various methods have been proposed for the reachability analysis of NFSs. The safety properties of these systems can be verified by the forward reachability analysis \cite{dutta2019reachability, huang2019reachnn, everett2021reachability, tran2020nnv, Fazlyab2022Safety, zhang22constrainedzonotope,zhang2022reachability,kochdumper2023open} or the backward reachability analysis \cite{vincent2021reachable,rober2023backward,zhang2023backward}. 

The backward reachability problem is to compute a set of states, known as the Backward Reachable Set (BRS), from which the system’s trajectories can reach a specified target region within a finite time horizon. When the target region is chosen as the unsafe set for safety verification problems, the computed BRS will bound the back-propagated trajectories that enter the unsafe set.  Although there exist various methods for computing the BRSs of systems without NNs \cite{mitchell2005time,yang2022efficient}, they are not directly applicable to NFSs due to the inherently complex and nonlinear nature of NNs. 
For isolated ReLU-activated NNs, \cite{vincent2021reachable} proposed a method to compute the exact BRS of NNs by representing the NNs as piecewise linear functions via the activation patterns of the ReLU functions. For the NFS with a linear plant model, a method based on Linear Programming (LP) relaxation was proposed in \cite{rober2022backward} to over-approximate the BRSs; this method was extended in \cite{rober2023backward} for NFSs with nonlinear plant models by utilizing the over-approximation tool OVERT \cite{sidrane2022overt} and a guided partition algorithm. Despite the effectiveness of these LP-based methods, the computed BRSs are usually limited in the form of intervals and tend to be conservative.

Recently, Hybrid Zonotope (HZ) was proposed as a new set representation that generalizes the polytope or constrained zonotope \cite{bird2023hybrid}. By including binary variables in the set representation, an HZ is equivalent to a finite union of polytopes, and therefore, can conveniently represent non-convex and disconnected sets with flat faces. HZs have proven advantages for set-based computations in terms of computational efficiency and accuracy 
\cite{bird2023hybrid,bird2021unions,bird2022hybrid,ortiz2023hybrid,siefert2023successor,siefert2023reachability}. In particular, HZs possess properties that make them well-suited for reachability analysis of NFSs by using set operations. For example, \cite{zhang2022reachability} showed that a Feed-forward Neural Network (FNN) with ReLU activation functions can be exactly represented by an HZ and provided algorithms to compute the exact and approximated
forward reachable sets (FRSs) of NFSs; \cite{zhang2023backward} considered NFSs with linear plant models and presented results for computing their exact BRSs in the form of HZs. 

This work aims to compute the HZ representations of the over-approximated BRSs for NFSs where the plant model is nonlinear and the controller is an FNN with general activation functions. The contributions of this work are at least twofold: (i) By leveraging over-approximation tools of nonlinear functions, closed-form identities are provided for over-approximating the BRSs of NFSs with general nonlinear plant models and ReLU-activated FNN controllers; 
(ii) The proposed method is extended to NNs with other commonly used activation functions such as leaky ReLU, tanh, and sigmoid. 
The remainder of this paper is organized as follows: Section \ref{sec:pre} provides preliminaries on HZ and the problem statement; Section \ref{sec:BRS} presents the results for NFSs with nonlinear plant models and ReLU-activated FNNs; Section \ref{sec:other_activate} extends these results to NFSs with general activation functions; two numerical examples are provided in Section \ref{sec:ex} to demonstrate the performance of the proposed method; and finally, the concluding remarks are made in Section \ref{sec:concl}.

\emph{Notation.} Vectors and matrices are denoted as bold letters (e.g., $\x \in \mathbb{R}^n$ and $\A \in \mathbb{R}^{n\times n}$). The $i$-th component of a vector $\x\in \mathbb{R}^n$ is denoted by $x_i$ with $i\in\{1,\dots,n\}$. For a matrix $\A\in \mathbb{R}^{n\times m}$, $\A[i:j,:]$ denotes the matrix constructed by the $i$-th to $j$-th rows of $\A$. The identity matrix in $\mathbb{R}^{n\times n}$ is denoted as $\bm{I}_n$ and $\bm{e}_i$ is the $i$-th column of $\bm{I}$. The vectors and matrices whose entries are all 0 (resp. 1) are denoted as $\bm{0}$ (resp. $\bm{1}$). Given sets $\mathcal{X}\subset \mathbb{R}^n$, $\mathcal{Z}\subset \mathbb{R}^m$ and a matrix $\bm{R}\in\mathbb{R}^{m\times n}$, the Cartesian product of $\mathcal{X}$ and $\mathcal{Z}$ is $\mathcal{X}\times \mathcal{Z} = \{(\x,\z)\;|\;\x\in\mathcal{X},\z\in\mathcal{Z}\}$, the Minkowski sum of $\mm{X}$ and $\mm{Z}$ is $\mm{X} \oplus \mm{Z} = \{ \mt{x}+\mt{z} \mid \mt{x} \in \mm{X}, \mt{z} \in \mm{Z} \}$, the generalized intersection of $\mathcal{X}$ and $\mathcal{Z}$ under $\bm R$ is $\mathcal{X} \cap_{\bm R}\mathcal{Z} = \{\x\in\mathcal{X}\;|\;\bm R \x\in\mathcal{Z}\}$, and the $k$-ary Cartesian power of $\mathcal{X}$ is $\mathcal{X}^k = {\mathcal{X}\times\cdots\times\mathcal{X}}$. Given $\underline{\mt{a}}$, $\overline{\mt{a}} \in \mathbb{R}^n$ such that $\underline{\mt{a}}\leq \overline{\mt{a}}$ where $\leq $ is in the element-wise sense, the set $\{\mt{x} \in \mathbb{R}^n\mid \underline{\mt{a}} \leq \mt{x} \leq \overline{\mt{a}}\}$ is denoted as $\li \underline{\mt{a}}, \overline{\mt{a}}\ri$. 
Given two scalar $\underline{a}$, $\overline{a} \in \mathbb{R}$ such that $\underline{a} \leq \overline{a}$, the interval $\{{x} \in \mathbb{R} \mid \underline{{a}} \leq {x} \leq \overline{{a}}\}$ is denoted as $\li \underline{{a}}, \overline{a} \ri$.




\section{Preliminaries \& Problem Statement}\label{sec:pre}

 

\subsection{Hybrid Zonotopes}\label{sec:zono}

\begin{definition}\cite[Definition 3]{bird2023hybrid}\label{def:sets}
The set $\Zc \subset\mathbb{R}^n$ is a \emph{hybrid zonotope} if there exist $\mathbf{c} \in \mathbb{R}^{n}$, $\mathbf{G}^c \in \mathbb{R}^{n \times n_{g}}$, $\G^b\in \mathbb{R}^{n \times n_{b}}$, $\A^c \in \mathbb{R}^{n_{c}\times{n_g}}$, $\A^b \in\mathbb{R}^{n_{c}\times{n_b}}$, $\bb \in \mathbb{R}^{n_{c}}$ such that
\begin{align*}
    & \mathcal{Z}\!=\!\left\{\mat{\G^c \!\! & \!\!\G^b}\mat{\bm{\xi}^c\\ \bm{\xi}^b}+\cc \left |\!\! \begin{array}{c}
{\mat{\bm{\xi}^c\\ \bm{\xi}^b} \in \mathcal{B}_{\infty}^{n_{g}} \times\{-1,1\}^{n_{b}}}, \\
{\mat{\A^c \!\!& \!\!\A^b}\mat{\bm{\xi}^c\\ \bm{\xi}^b}=\bb}
\end{array}\right.\!\!\!\right\}
\end{align*}
where $\mathcal{B}_{\infty}^{n_g}=\left\{\bm{x} \in \mathbb{R}^{n_g} \;|\;\|\bm x\|_{\infty} \leq 1\right\}$ is the unit hypercube in $\mathbb{R}^{n_{g}}$. The \emph{HCG}-representation of the hybrid zonotope 
is given by $\mathcal{Z}= \hz{}$.
\end{definition}

Given an HZ $\mathcal{Z} = \hz{}$, the vector $\cc$ is called the center, the columns of $\G^b$ are called the \emph{binary generators}, and the columns of $\G^c$ are called the \emph{continuous generators}. The representation complexity of the HZ $\mm{Z}$ is determined by the number of continuous generators $n_g$, the number of binary generators $n_b$, and the number of equality constraints $n_c$ \cite{bird2023hybrid}. 
For simplicity, we define the set $\mathcal{B}(\A^c,\A^b,\bb) \triangleq \{(\bm{\xi}^c,\bm{\xi}^b) \in \mathcal{B}^{n_g}_\infty \times \{-1,1\}^{n_b} \;|\; \A^c\bm{\xi}^c + \A^b\bm{\xi}^b = \bb \}$. 
An HZ with $n_b$ binary generators is equivalent to the union of $2^{n_b}$ constrained zonotopes \cite[Theorem 5]{bird2023hybrid}. 
Identities are provided to compute the linear map and generalized intersection \cite[Proposition 7]{bird2023hybrid}, the union operation \cite[Proposition 1]{bird2021unions}, and the Cartesian product of HZs \cite[Proposition 3.2.5]{bird2022hybrid}.

\subsection{Problem Statement}\label{sec:prob}

Consider the following discrete-time nonlinear system:
\begin{equation}\label{dt-sys}
    \x{(t+1)} = \bm f (\x(t),\u(t)) 
\end{equation}
where $\x(t)\in  \mathbb{R}^n,\; \u(t)\in \mathbb{R}^m$ are the state and the control input, respectively. We assume $\x\in\mathcal{X}$ and $\u\in\mathcal{U}\triangleq \li \underline{\u},\overline{\u} \ri$, where $\mathcal{X}\subset \mathbb{R}^n$ is called the state set and $\mathcal{U}\subset \mathbb{R}^m$ is called the input set. 
The control input $\u$ is given as 
\begin{equation}
\u(t) = \mt{\pi}(\x(t))\label{dt-input}    
\end{equation}
where $\pi$ is an $\ell$-layer FNN. The $k$-th layer weight matrix and the bias vector of $\pi$ are denoted as $\bm W^{(k-1)}$ and $\bm v^{(k-1)}$, respectively, where $k=1,\dots,\ell$. Denote $\x^{(k)}$ as the neurons of the $k$-th layer and $n_{k}$ as the dimension of $\x^{(k)}$. Then, for $k=1,\dots,\ell-1$, 
\begin{align}\label{equ:NN}
\x^{(k)}=\mt{\phi}(\bm W^{(k-1)}\x^{(k-1)}+ \bm v^{(k-1)})    
\end{align}
where $\x^{(0)} = \x(t)$ and $\mt{\phi}$ is the vector-valued activation function constructed by component-wise repetition of the activation function $\sigma(\cdot)$, i.e., 
$
\mt{\phi}(\x) \triangleq [\sigma(x_1) \;\cdots\; \sigma(x_{n})]^\top.$ 
In the last layer, only the linear map is applied, i.e., $$\mt{\pi}(\x(t)) = \x^{(\ell)} = \bm W^{(\ell-1)}\x^{(\ell-1)}+\bm v^{(\ell-1)}.$$ The activation function considered in this work is not restricted to ReLU; see Section \ref{sec:other_activate} for more discussion. 
In the following, we assume the FNN controller satisfies 
$\mt{\pi}(\x) \in \mm{U}$, $\forall \x\in\mm{X}$; this assumption is not restrictive since the output of any FNN can be saturated into a given range by adding two additional layers with ReLU activation functions \cite{everett2021reachability,zhang2023backward}.

The NFS consisting of system \eqref{dt-sys} and the controller \eqref{dt-input}   is a closed-loop system denoted as:
\begin{equation}\label{close-sys}
    \x{(t+1)} = \f_{cl}(\x(t)) \triangleq \bm f(\x(t),\mt{\pi}(\x(t))).
\end{equation}

Given a target set $\mathcal{T} \subset \mathcal{X}$ for the system \eqref{close-sys}, the set of states that can be mapped into the target set $\mathcal{T}$ by \eqref{close-sys} in exactly $T$ steps is defined as the $T$-step BRS and denoted as $\mathcal{P}_T(\mathcal{T})\triangleq\{\x(0)\in \mathcal{X} \mid x(t) = \f_{cl}(\x(t-1)),  x(T)\in \mathcal{T}, t=1,2,\dots, T\}.$ 
For simplicity, the one-step BRS is also denoted as $\mathcal{P}(\mathcal{T})$, i.e., $\mathcal{P}(\mathcal{T}) = \mathcal{P}_1(\mathcal{T})$. In this work, we
assume the state set $\mathcal{X}$, the input set $\mathcal{U}$, the target set $\mathcal{T}$, and the set that over-approximates the nonlinear plant model $\bm f$ are all represented as HZs; this assumption enables us to handle sets and plant models using a unified HZ-based approach. 


The following problem will be investigated in this work: 
\emph{Given a target set $\mathcal{T}\subset \mathcal{X}$ represented as an HZ and a time horizon $T \in \mathbb{Z}_{>0}$, compute the over-approximation of BRS $\mathcal{P}_t(\mathcal{T})$ of the neural feedback system \eqref{close-sys}, for $t=1,2,\dots,T$.}

Compared with our previous result \cite{zhang2023backward} that computes the exact BRS for NFSs consisting of linear plant models and ReLU-activated FNN controllers, this work considers NFSs consisting of general nonlinear plant models and FNNs with general activation functions, and therefore, only over-approximated BRSs are computed.

\section{Backward Reachability Analysis of ReLU Activated Neural Feedback Systems}\label{sec:BRS}
Our proposed method includes three main steps: (i) compute an HZ envelope 
of the nonlinear function $\bm f$; (ii) find an HZ graph representation of the input-output relationship of the FNN control policy $\mt{\pi}$; (iii) calculate the one-step over-approximated BRS in closed form, and refine the set when an accurate result takes precedence over computational efficiency. In this section, we assume the controller $\u$ shown in \eqref{dt-input}   
is an FNN $\mt{\pi}$ with ReLU activation functions across all the layers, i.e., $\sigma(x) = ReLU(x) = \max(x,0)$; this assumption will be relaxed in the next section. Note that the time index of $\mt{x}$ and $\mt{u}$ are omitted for clarity in the section.  

\subsection{Envelope of Nonlinear Functions} \label{subsec:nonlin}
For NFSs with linear plant models and ReLU activation functions, HZ-based methods have been proposed to compute both the exact FRSs and the exact BRSs \cite{zhang2022reachability,zhang2023backward,ortiz2023hybrid}. However, since HZs can only represent sets with flat faces (i.e., unions of polytopes), these exact analysis methods are inapplicable to the case of general nonlinear plant models shown in \eqref{close-sys}. 
As the first step towards computing the over-approximated BRS of the system \eqref{close-sys}, we will find an envelope of the nonlinear function $\bm f$. 

We denote the \emph{graph set} of a given function $\f$ over the domain $\mm{X}\times \mm{U}$ as $\mm{G}_f(\mm{X},\mm{U}) \triangleq \{[\x^\top \ \u^\top \ \y^\top]^\top\;|\;\y = \f(\x,\u),\x\in\mm{X},\u\in\mm{U}\}$. Since the exact representation of the graph set $\mm{G}_f(\mm{X},\mm{U})$ is difficult to find, we will compute an HZ-represented over-approximation of $\mm{G}_f(\mm{X},\mm{U})$, denoted as $\overline{\mm{H}}_f(\mm{X},\mm{U}) $, such that $\overline{\mm{H}}_f(\mm{X},\mm{U}) \supseteq \mm{G}_f(\mm{X},\mm{U})$. The envelope of a nonlinear function over a given domain can be found using off-the-shelf methods  \cite{mccormick1976computability,caratzoulas2005trigonometric,wanufelle2007global}. In this work, we make use of two approximation tools, namely Special Ordered Set (SOS) \cite{beale1970special} and OVERT \cite{sidrane2022overt}, mainly because the envelopes obtained by these two methods are easy to compute and can be readily incorporated with the HZ-based set computations.

The over-approximation method based on SOS was originally developed for solving nonlinear and nonconvex optimization problems \cite{beale1970special}. In recent work, SOS approximations were utilized to compute the FRSs of nonlinear dynamical systems \cite{siefert2023successor}. 
The SOS approximation $\mathcal{S}$ of a scalar-valued function was defined in \cite[Definition 3]{siefert2023successor}, 
while the identity that converts $\mathcal{S}$ into an HZ was provided in \cite[Theoem 4]{siefert2023successor}. The maximum SOS approximation error $\mt{\delta}$ for some common nonlinear functions was provided in \cite{wanufelle2007global}. Given an SOS approximation $\mm{H}_{SOS}$ as an HZ and the maximum approximation error $\mt{\delta} \in \mathbb{R}^n$, an envelop of the function $\bm f$ can be given as an HZ $\overline{\mm{H}}_f(\mm{X},\mm{U}) \triangleq \mm{H}_{SOS} \oplus \mm{E}$, where $\mm{E} = \langle \text{diag}([\mt{0}^\top_{n+m} \ \mt{\delta}^\top]),\emptyset, \mt{0}_{2n+m},\emptyset,\emptyset,\emptyset \rangle$ is the error HZ. For nonlinear functions with multidimensional inputs, decomposition techniques can be applied to avoid the exponential complexity of SOS approximations \cite{wanufelle2007global}.


For the OVERT method, a nonlinear function $\f$ with multidimensional inputs and outputs will be decomposed into multiple elementary functions $e(\cdot)$ with single inputs and single outputs by the functional rewriting technique, and then over-approximations of the element functions will be constructed using a set of optimally chosen piecewise linear upper and lower bounds \cite{sidrane2022overt}. 
In this work, we assume that the number of breakpoints of the piecewise linear upper and lower bounds, denoted as $N$ ($N\geq 3$), are the same and only optimize the breakpoints along one of the piecewise linear bounds using OVERT. 
Denote the set of breakpoints for the piecewise linear upper (resp., lower) bound as $\{(x_i,y_i^{ub})\}_{i=1}^N$ (resp., $\{(x_i,y_i^{lb})\}_{i=1}^N$). Note that the $ x$-coordinates of the lower and upper bound are the same. Since the endpoints satisfy $y_1^{lb} = y_1^{ub} = e(x_1)$ and $y_N^{lb} = y_N^{ub} = e(x_N)$, 
the set bounded by the piecewise linear upper and lower bounds can be seen as a union of $N-1$ polytopes.  Each polytope can be constructed by the breakpoints of the upper and lower bounds, leading to $N-1$ V-rep polytopes that over-approximates the nonlinear element functions. Using \cite[Theorem 5]{siefert2023reachability}, an HZ envelope representing the union of V-rep polytopes can be found.

\begin{figure}[t] 
\centering
     \begin{subfigure}{0.237\textwidth}
         \centering
         \includegraphics[width=\textwidth]{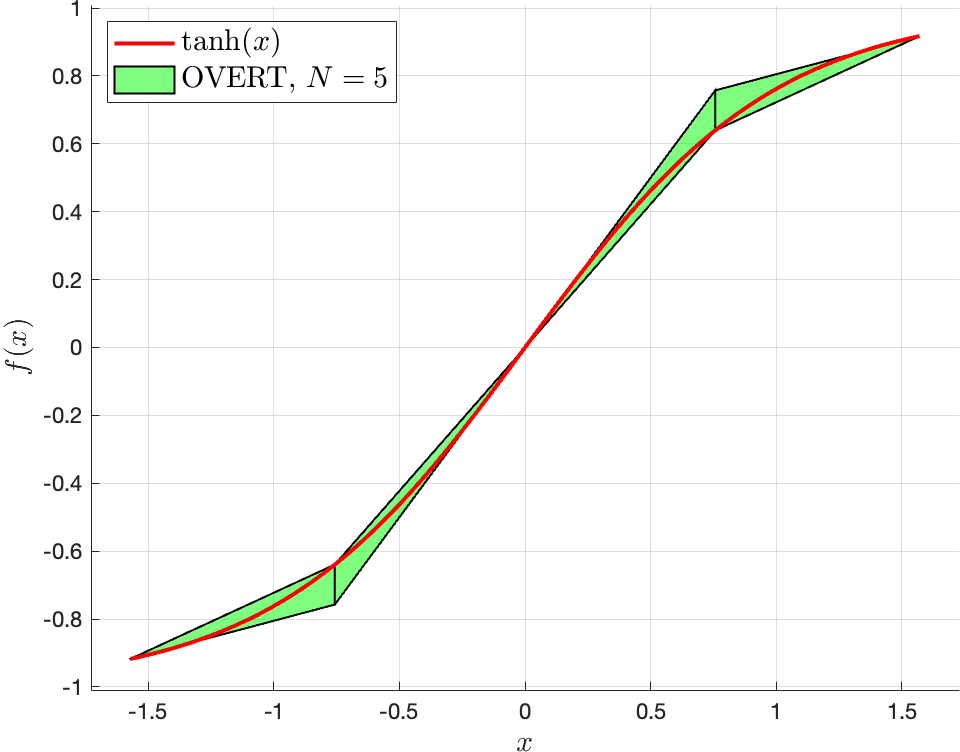}
     \end{subfigure}
     \begin{subfigure}{0.237\textwidth}
         \centering
         \includegraphics[width=\textwidth]{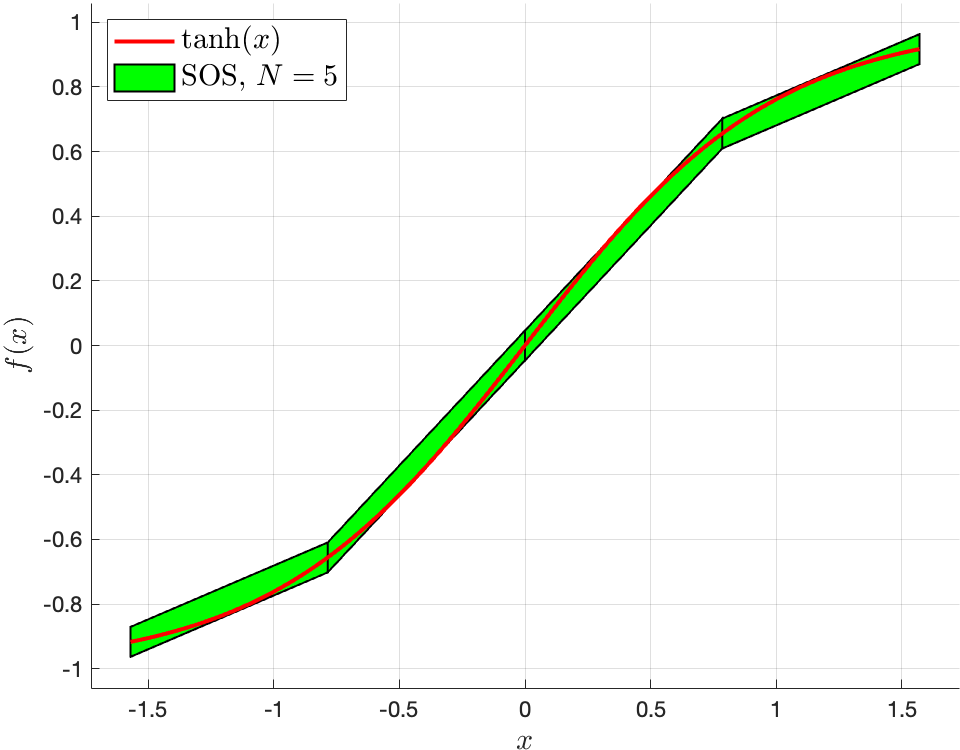}
     \end{subfigure}
    
    \caption{Comparison between OVERT and SOS. Both over-approximate $\tanh(x)$ for $x \in \li -\pi/2, \pi/2\ri$ with the union of 4 polytopes.}
    \label{fig:SOS_OVERT}
    \vskip -5mm
\end{figure}

While both SOS-based and OVERT-based methods can utilize decomposition techniques to generate a tight over-approximation of $\bm f$, the resultant HZ envelopes exhibit varying levels of approximation accuracy and set complexity. For example, to compare the set complexity of the resulting HZs, we let the number of breakpoints be $N$ for both the SOS and the OVERT approaches. Then, for any one-dimensional nonlinear elementary function $e(x)$, the complexity 
of the HZ representations computed by SOS and OVERT is given by
\begin{align*}
n^{SOS}_g &   = 2N+2,\; n^{SOS}_b = N-1,\; n^{SOS}_c = N+4 ,\\
     n^{OVERT}_g  & = 4N-4,\; n^{OVERT}_b = N-1,\; n^{OVERT}_c = 2N.
\end{align*}
It can be observed that the HZ representation based on OVERT approximations tends to have a larger set complexity than the SOS method in terms of more continuous generators and more equality constraints. However, the OVERT method usually leads to more accurate over-approximations than the SOS method. This is because the approximation error $\mt{\delta}$ in the SOS method remains constant across the entire domain, whereas the approximation error in the OVERT method varies based on the distance from the breakpoints. An example of over-approximation of the $\tanh$ function using both methods is shown in Figure \ref{fig:SOS_OVERT}. Comparisons of the two methods will be further discussed in Section \ref{sec:ex}. 

\subsection{Over-approximation of One-step BRS}\label{subsec:over_brs}

In this subsection, we will compute the closed-form expression of an over-approximation of the one-step BRS, utilizing the HZ envelope of $\bm f$ calculated in Section \ref{subsec:nonlin}. 

In order to incorporate the FNN controller $\mt{\pi}$ into the HZ-based framework, we apply Algorithm 1 in our previous work \cite{zhang2023backward} to compute an HZ-represented graph set of $\mt{\pi}$ over the domain $\mathcal{X}$, denoted as $\mathcal{H}_\pi = \hz{\pi}$, i.e., 
\begin{align*}
\mathcal{H}_\pi = \mathcal{G}_\pi(\mathcal{X})\triangleq \{[\x^\top \ \u^\top]^\top \;|\; \u = \mt{\pi}(\x),\x\in\mm{X}\}.
\end{align*}
Let $\overline{\mathcal{H}}_f(\mathcal{X},\mathcal{U}) = \hz{f}$ be the computed HZ envelope of the nonlinear function $\bm f$ over the domain $\mathcal{X}\times \mathcal{U}$ using the approach presented in Section \ref{subsec:nonlin}, i.e. $\overline{\mathcal{H}}_f(\mathcal{X},\mathcal{U}) \supseteq \mathcal{G}_f(\mathcal{X},\mathcal{U})$. 

With notations above, the following theorem provides the closed-form identities of an over-approximation of the one-step BRS, $\overline{\mathcal{P}}(\mathcal{T})$, with a given target set represented by an HZ.



\begin{theorem}\label{thm:brs}
    Given the state set $\mm{X}$, the input set $\mm{U}$, and a target set represented by an HZ $\mathcal{T}= \hz{\tau} \subset\mathbb{R}^n$, the following HZ is an over-approximated one-step BRS of the NFS \eqref{close-sys}:
    \begin{equation}\label{equ:thm_brs}
        \overline{\mathcal{P}}(\mathcal{T}) = \langle \G^c_p, \G^b_p, \cc_p, \A^c_p, \A^b_p, \bb_p \rangle
    \end{equation}
    where $\cc_p \!=\! \cc_\pi[1:n,:],\; \G^c_p \!=\! \mat{\G^c_\pi[1:n,:] & \bm{0} & \bm 0},\; \G^b_p = \mat{\G^b_\pi[1:n,:] & \bm{0}& \bm 0}$ and
    \begin{align*}
         \A^c_p & \!=\! \mat{\A^c_\pi \!&\! \bm{0} \!&\! \bm 0\\ \bm{0} \!&\! \A^c_f \!&\! \bm 0 \\ \bm{0} \!&\! \bm 0 \!&\! \A^c_\tau\\\G^c_\pi \!&\!-\G^c_f[1:n+m,:] \!&\! \bm 0 \\ \bm{0} \!&\! \G^c_f[n+m+1:2n+m,:] \!&\! -\G^c_\tau }\!,\\ 
         \A^b_p &\!=\! \mat{\A^b_\pi \!&\! \bm{0}\!&\! \bm 0\\ \bm{0} \!&\! \A^b_f\!&\! \bm 0 \\ \bm{0} \!&\! \bm 0 \!&\! \A^b_\tau \\\G^b_\pi \!&\!-\G^b_f[1:n+m,:] \!&\! \bm 0 \\ \bm{0} \!&\! \G^b_f[n+m+1:2n+m,:] \!&\! -\G^b_\tau }\!,\\
         \bb_p &\!=\! \mat{\bb_\pi\\\bb_f\\ \bb_\tau\\\cc_f[1:n+m,:]-\cc_\pi \\ \cc_\tau - \cc_f[n+m+1:2n+m,:]}.
    \end{align*}
\end{theorem}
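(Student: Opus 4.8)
The plan is to realize $\overline{\mathcal{P}}(\mathcal{T})$ as the projection of a single generalized intersection built from the three HZs $\mathcal{H}_\pi$, $\mathcal{T}$, and $\overline{\mathcal{H}}_f(\mm{X},\mm{U})$, and then to read off the closed-form matrices from the known HZ identities. Concretely, I would first establish the set identity
\begin{equation*}
\overline{\mathcal{P}}(\mathcal{T}) = \mt{P}_x\big[(\mathcal{H}_\pi \times \mathcal{T}) \cap_{\bm{I}_{2n+m}} \overline{\mathcal{H}}_f(\mm{X},\mm{U})\big], \quad \mt{P}_x = [\bm{I}_n \ \bm{0}_{n\times(n+m)}].
\end{equation*}
Since $\mathcal{H}_\pi \subset \mathbb{R}^{n+m}$ and $\mathcal{T}\subset\mathbb{R}^n$, the product $\mathcal{H}_\pi\times\mathcal{T}\subset\mathbb{R}^{2n+m}$ carries the coordinate layout $(\x,\u,\z)$ matching the $(\x,\u,\y)$ layout of $\overline{\mathcal{H}}_f$; intersecting under $\bm{I}$ therefore forces $\x=\x_f$, $\u=\u_f$ and, crucially, identifies the target coordinate $\z$ with the plant-output coordinate $\y$. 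A point of the bracketed set is thus a triple $(\x,\u,\z)$ with $(\x,\u)\in\mathcal{H}_\pi$ (i.e. $\u=\mt{\pi}(\x)$, $\x\in\mm{X}$), $\z\in\mathcal{T}$, and $(\x,\u,\z)\in\overline{\mathcal{H}}_f$, and $\mt{P}_x$ returns its $\x$-component.

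Second, I would apply the identities in order. The Cartesian-product identity \cite[Proposition 3.2.5]{bird2022hybrid} forms $\mathcal{H}_\pi\times\mathcal{T}$ with block-diagonal generator and constraint matrices and stacked center; write $\cc_1,\G^c_1,\G^b_1$ for its center and generators. The generalized-intersection identity \cite[Proposition 7]{bird2023hybrid} with $\bm{R}=\bm{I}_{2n+m}$ then appends $\overline{\mathcal{H}}_f$'s generators as zeros in the output map, stacks the three own-constraint blocks, and adds one coupling block $[\,\G^c_1 \ {-}\G^c_f\,]$ (and its binary analogue) with right-hand side $\cc_f-\cc_1$. Splitting this coupling block along its first $n+m$ rows and its last $n$ rows produces exactly the controller--plant and plant--target couplings (the fourth and fifth block-rows of $\A^c_p,\A^b_p$), and the linear-map identity \cite[Proposition 7]{bird2023hybrid} for $\mt{P}_x$ keeps only the first $n$ rows of the $\mathcal{H}_\pi$-generators, yielding $\cc_p,\G^c_p,\G^b_p$. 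The only differences from \eqref{equ:thm_brs} are cosmetic and set-preserving: the generator/column blocks must be reordered from the natural $(\pi,\tau,f)$ order to the paper's $(\pi,f,\tau)$ order, and the plant--target coupling block is negated on both sides.

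Finally, for the over-approximation claim I would replace $\overline{\mathcal{H}}_f$ by the exact graph $\mm{G}_f(\mm{X},\mm{U})$ in the same construction and verify it returns the exact BRS: the constraints then read $\u=\mt{\pi}(\x)$, $\x\in\mm{X}$, $\z=\bm f(\x,\u)$, $\z\in\mathcal{T}$, whose $\x$-projection is $\{\x\in\mm{X}\mid \f_{cl}(\x)\in\mathcal{T}\}=\mathcal{P}(\mathcal{T})$; here the standing assumption $\mt{\pi}(\x)\in\mm{U}$ is precisely what guarantees that $\big(\x,\mt{\pi}(\x),\bm f(\x,\mt{\pi}(\x))\big)$ actually lies in $\mm{G}_f(\mm{X},\mm{U})$. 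Because Cartesian product, generalized intersection (in its set argument), and linear map are all monotone under set inclusion, the inclusion $\overline{\mathcal{H}}_f\supseteq\mm{G}_f(\mm{X},\mm{U})$ propagates to $\overline{\mathcal{P}}(\mathcal{T})\supseteq\mathcal{P}(\mathcal{T})$. I expect the main obstacle to be purely bookkeeping — aligning the coordinate and generator orderings together with the row-slicing notation $[1{:}n{+}m,:]$ and $[n{+}m{+}1{:}2n{+}m,:]$ so that the composed identities reproduce \eqref{equ:thm_brs} term by term — while the single genuine modeling point to state carefully is the role of $\mt{\pi}(\x)\in\mm{U}$ in the exactness step.
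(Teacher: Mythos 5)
Your proposal is correct, but it follows a genuinely different route from the paper. The paper's proof is a direct, self-contained double inclusion: it defines the candidate set $\mathcal{H}_p$ by the same three membership conditions you use, observes $\mathcal{H}_p \supseteq \mathcal{P}(\mathcal{T})$ from $\overline{\mathcal{H}}_f \supseteq \mathcal{G}_f$, and then proves $\mathcal{H}_p = \overline{\mathcal{P}}(\mathcal{T})$ by taking an arbitrary point, extracting (resp.\ partitioning) the generator coefficients $\bxi^c_\pi,\bxi^b_\pi,\bxi^c_f,\bxi^b_f,\bxi^c_\tau,\bxi^b_\tau$, and checking the five constraint blocks of $\A^c_p,\A^b_p,\bb_p$ by hand. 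You instead factor the construction through the closed-form HZ identities for Cartesian product, generalized intersection under $\bm{I}_{2n+m}$, and linear map, and obtain the over-approximation claim from monotonicity of these operations plus an exactness check on $\mathcal{G}_f$. Your set identity and the resulting matrices are right: the bracketed set equals the paper's $\mathcal{H}_p$, the coupling block of the intersection splits into exactly the fourth and fifth block-rows after the $(\pi,\tau,f)\to(\pi,f,\tau)$ column reordering and the sign flip on the plant--target constraint, and both modifications are set-preserving. What your approach buys is modularity and a cleaner monotonicity argument (and you make explicit the role of $\mt{\pi}(\x)\in\mm{U}$ in guaranteeing membership in $\mathcal{G}_f(\mm{X},\mm{U})$, which the paper handles only implicitly); what the paper's element-chasing buys is independence from the precise statements of the cited identities and a direct verification that the displayed matrices, in their given ordering and signs, define the intended set. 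Either argument would be acceptable; if you keep yours, state the reordering/negation step as an explicit equality of HZ-represented sets rather than calling it cosmetic.
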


\begin{proof}
    By the definition of the one-step BRS, we have
     $$
     \begin{aligned}
         \mathcal{P}(\mathcal{T}) & = \{\x \in\mathcal{X}\;|\; \bm f(\x,\u) \in \mathcal{T},\u = \mt{\pi}(\x)\} \\
         & = \{\x \;| \mat{\mt{x} \\ \mt{u}}\!\in\!\mathcal{X}\times \mm{U}, \u = \mt{\pi}(\x), \y = \bm f(\x,\u), \y \in \mathcal{T}\}\\
     & = \{\x \;| \mat{\mt{x} \\ \mt{u}} \!\in\! \mathcal{H}_\pi, \mat{\mt{x} \\ \u\\ \mt{y}}\in \mathcal{G}_f(\mathcal{X},\mm{U}), \y\in \mathcal{T}\}.
     \end{aligned}
     $$
     Denote 
     $$\mathcal{H}_p = \{\x \;|\; \mat{\mt{x} \\ \mt{u}}\in \mathcal{H}_\pi, \mat{\mt{x} \\ \u\\ \mt{y}}\in \overline{\mathcal{H}}_f(\mathcal{X},\mm{U}), \y\in \mathcal{T}\}.$$
     Since $\overline{\mathcal{H}}_f(\mathcal{X},\mm{U}) \supseteq \mathcal{G}_f(\mathcal{X},\mm{U})$, it's easy to check that $\mathcal{H}_p$ is an over-approximation of the one-step BRS $\mathcal{P}(\mathcal{T})$, i.e., $\mathcal{H}_p \supseteq \mathcal{P}(\mathcal{T})$. The remaining of this proof is then to show that $\mathcal{H}_p = \overline{\mm{P}}(\mathcal{T})$.
     
     We will first prove that $\mathcal{H}_p\subseteq \overline{\mm{P}}(\mathcal{T})$. Let $\x$ be any element of set $\mathcal{H}_p$. Then, based on the definitions of $\mathcal{H}_\pi$, $\overline{\mathcal{H}}_f(\mathcal{X},\mm{U})$ and $\mathcal{T}$, there exist $\bm \xi^c_\pi, \bm \xi^b_\pi, \bm \xi^c_f, \bm \xi^b_f, \bm \xi^c_\tau$ and $\bm \xi^b_\tau$ such that 
     \begin{align*}
    &(\bxi_\pi^c,\bxi_\pi^b)\in \mathcal{B}(\A^c_\pi,\A^b_\pi,\bb_\pi), \\
     &(\bxi_f^c,\bxi_f^b)\in \mathcal{B}(\A^c_f,\A^b_f,\bb_f), \\
     &(\bxi_\tau^c,\bxi_\tau^b)\in \mathcal{B}(\A^c_\tau,\A^b_\tau,\bb_\tau), 
     \end{align*}
     and
     \begin{subequations}\label{equ:allsub}
         \begin{align}
             [\x^\top \; \u^\top]^\top & = \G^c_\pi\bxi_\pi^c+ \G^b_\pi\bxi^b_\pi+\cc_\pi,\label{equ:sub1}\\
             [\x^\top \; \u^\top \; \bm y^\top]^\top &= \G^c_f\bxi_f^c+ \G^b_f\bxi^b_f+\cc_f, \label{equ:sub2}\\
             \y &= \G^c_\tau\bxi_\tau^c+ \G^b_\tau\bxi^b_\tau+\cc_\tau. \label{equ:sub3}
         \end{align}
     \end{subequations}
     Therefore, using \eqref{equ:sub1}, we have 
     $$
     \begin{aligned}\x & = [\bm I_n \;\;\bm{0}][\x^\top \;\; \u^\top]^\top \\
     & = [\bm I_n \;\; \bm{0}] (\G^c_\pi\bxi_\pi^c+ \G^b_\pi\bxi^b_\pi+\cc_\pi)\\
     & = \G^c_\pi[1:n,:]\bxi_\pi^c+ \G^b_\pi[1:n,:]\bxi^b_\pi+\cc_\pi[1:n,:].
     \end{aligned}
     $$
    Using \eqref{equ:sub2}, we have 
     \begin{align*}
     [\x^\top \; \u^\top]^\top &=  [\bm I_{n+m} \;\; \bm{0}][\x^\top \; \u^\top\; \y^\top]^\top \\
     &=  [\bm I_{n+m} \;\; \bm{0}] (\G^c_f\bxi_f^c+ \G^b_f\bxi^b_f+\cc_f)\\
     & =  \G^c_f[1:n+m,:]\bxi_f^c+ \G^b_f[1:n+m,:]\bxi^b_f\\
      &\quad  +\cc_f[1:n+m,:]    
     \end{align*}
     and 
     $$
     \begin{aligned}
     \y & =  [\bm 0 \;\; \bm 0 \;\;\bm{I}_n][\x^\top\;\; \u^\top\;\; \y^\top]^\top \\ 
     & =  \G^c_f[n\!+\!m\!+\!1\!:\!2n\!+\!m,:]\bxi_f^c \!+\! \G^b_f[n\!+\!m\!+\!1\!:\!2n\!+\!m,:]\bxi^b_f\\
     & \quad+\cc_f[n\!+\!m\!+\!1\!:\!2n\!+\!m,:].
     \end{aligned}
     $$ 
     Combining \eqref{equ:sub1} and $[\x^\top \; \u^\top]^\top$ from \eqref{equ:sub2}, we have 
     $$
     \begin{aligned}
         \G^c_\pi\bxi_\pi^c\!+\!\G^b_\pi\bxi^b_\pi\!+\!\cc_\pi& =  \G^c_f[1\!:\!n\!+\!m,:]\bxi_f^c \!+\! \G^b_f[1\!:\!n\!+\!m,:]\bxi^b_f\\ & \quad +\cc_f[1\!:\!n\!+\!m,:].
     \end{aligned}
     $$
     Similarly, by combing \eqref{equ:sub3} and $\y$ computed from \eqref{equ:sub2}, we get 
     $$
     \begin{aligned}
         & \G^c_\tau\bxi_\tau^c+ \G^b_\tau\bxi^b_\tau+\cc_\tau =  \G^c_f[n\!+\!m\!+\!1\!:\!2n\!+\!m,:]\bxi_f^c \\ 
         & \quad  + \G^b_f[n\!+\!m\!+\!1\!:\!2n\!+\!m,:]\bxi^b_f + \cc_f[n\!+\!m\!+\!1\!:\!2n\!+\!m,:].
     \end{aligned}
     $$
     Let $$\bxi^c = \mat{\bxi^{c}_\pi\\\bxi^{c}_f\\\bxi^c_\tau} \text{ and } \bxi^b = \mat{\bxi^b_\pi\\\bxi^b_f\\\bxi^b_\tau}.$$ 
     Then, it's easy to check that $(\bxi^c,\bxi^b)\in\mathcal{B}(\A^c_p,\A^b_p,\bb_p)$ and 
     \begin{align*}
     \G^c_p\bxi^c+ \G^b_p\bxi^b+\cc_p  & = [\G^c_\pi[1\!:\!n,:] \;\; \bm{0}\;\; \bm{0}]\bxi^c\\
     & + [\G^b_\pi[1\!:\!n,:] \;\; \bm{0} \;\; \bm{0}]\bxi^b +\cc_\pi[1\!:\!n,:].
     \end{align*}
     Thus, we have $\x\in\overline{\mm{P}}(\mathcal{T})$. Since $\x$ is arbitrary, we get $\mathcal{H}_p\subseteq \overline{\mm{P}}(\mathcal{T})$. 

     Next, we'll show that $\mathcal{H}_p\supseteq \overline{\mm{P}}(\mathcal{T})$. Let $\x\in\overline{\mm{P}}(\mathcal{T})$. Then, there exist $(\bxi^c,\bxi^b) \in\mathcal{B}(\A^c_p,\A^b_p,\bb_p)$ such that $\x = \G^c_p\bxi^c+ \G^b_p\bxi^b+\cc_p.$ 
     
     Partitioning $\bxi^c$ as $\bxi^c = [(\bxi^{c}_\pi)^\top \; (\bxi^{c}_f)^\top \;(\bxi^c_\tau)^\top]^\top$ and $\bxi^b$ as $\bxi^b = [(\bxi^b_\pi)^\top\;(\bxi^b_f)^\top\;(\bxi^b_\tau)^\top]^\top$, it follows that 
     \begin{align*}
    &(\bxi_\pi^c,\bxi_\pi^b)\in \mathcal{B}(\A^c_\pi,\A^b_\pi,\bb_\pi),\\ &(\bxi_f^c,\bxi_f^b)\in \mathcal{B}(\A^c_f,\A^b_f,\bb_f),\\ &(\bxi_\tau^c,\bxi_\tau^b)\in \mathcal{B}(\A^c_\tau,\A^b_\tau,\bb_\tau),
     \end{align*}
     and 
     \begin{align*}
         \x  & = \G^c_\pi[1:n,:]\bxi_\pi^c+ \G^b_\pi[1:n,:]\bxi^b_\pi+\cc_\pi[1:n,:]\\
         & = \G^c_f[1:n,:]\bxi_f^c+ \G^b_f[1:n,:]\bxi^b_f+\cc_f[1:n,:].
     \end{align*}
     Choose 
     \begin{align*}
         \u &=  \G^c_\pi[n+1\!:\!n+m,:]\bxi_\pi^c+ \G^b_\pi[n+1\!:\!n+m,:]\bxi^b_\pi\\
         &\quad +\cc_\pi[n+1\!:\!n+m,:],\\
         \y &=   \G^c_f[n\!+\!m\!+\!1\!:\!2n\!+\!m,:]\bxi_f^c\!+\!\G^b_f[n\!+\!m\!+\!1\!:\!2n\!+\!m,:]\bxi^b_f\\
         & \quad+\cc_f[n\!+\!m\!+\!1\!:\!2n\!+\!m,:].
     \end{align*}
     
     Then, it's easy to check that $\x$,$\u$ and $\y$ satisfy \eqref{equ:allsub}. Thus, $\x\in \mathcal{H}_p$. Since $\x$ is arbitrary, $\mathcal{H}_p\supseteq \overline{\mm{P}}(\mathcal{T})$. 
     
     Therefore, we conclude that $\mathcal{H}_p = \overline{\mm{P}}(\mathcal{T})$. 
\end{proof}

Theorem \ref{thm:brs} can be applied to NFSs with general nonlinear plant models, but the set $\overline{\mathcal{P}}(\mathcal{T})$ computed may induce redundant equality constraints in the HZ representation. 
The following result provides a BRS representation of $\overline{\mathcal{P}}(\mathcal{T})$ with less redundant constraints for NFSs with a special class of nonlinear plant models.

\begin{corollary}\label{coro:linear_input}
Consider an NFS \eqref{close-sys} where $\f(\x,\u) = \g(\x)+\bm B_d \u$. Given the state set $\mm{X}$, the input set $\mm{U}$ and a target set represented by an HZ $\mathcal{T}= \hz{\tau} \subset\mathbb{R}^n$, let $\mathcal{H}_\pi = \hz{\pi}$ be the computed HZ-based graph set of the FNN $\pi$ over the domain $\mathcal{X}$. 
Let $\overline{\mathcal{H}}_g(\mathcal{X}) = \hz{g}$ be the computed HZ envelope of the nonlinear function $\bm g$ over the domain $\mathcal{X}$. 
Then, an over-approximated one-step BRS of the NFS can be computed as an HZ given by
$
        \tilde{\mathcal{P}}(\mathcal{T}) = \langle \G^c_p, \G^b_p, \cc_p, {\tilde \A^c_{p}}, { \tilde \A^b_{p}}, {\tilde \bb_{p}} \rangle,
$ 
    where $ \G^c_p, \G^b_p, \cc_p$ are the same as in Theorem \ref{thm:brs}, 
    \begin{align*}
    \tilde{\bb}_{p} &= \mat{\bb_\pi^\top & \bb_g^\top & \bb_\tau^\top & \bb_1^\top & \bb_2^\top},\\ \tilde{\A}^c_{p} &= \mat{\operatorname{diag}(\A^c_\pi,\A^c_g,\A^c_\tau) \\ \A_1 },\\ 
    \tilde{\A}^b_{p} &= \mat{\operatorname{diag}(\A^b_\pi,\A^b_g,\A^b_\tau) \\ \A_2 },
    \end{align*}
    with 
    \begin{align*}
    \bb_1 &= \cc_g[1:n,:]-\cc_\pi[1:n,:],\\
    \bb_2 &=\cc_\tau- \bm{B}_d \cc_\pi[n+1:n+m,:] - \cc_g[n+1:2n,:],\\
    \A_1 & \!=\! \mat{\G^c_\pi[1:n,:] \!&\!-\G^c_g[1:n,:] \!&\! \bm 0 \\ \bm{B}_d \G^c_\pi[n+1:n+m,:] \!&\! \G^c_g[n+1:2n,:] \!&\! -\G^c_\tau},\\ 
    \A_2 &\!=\! \mat{\G^b_\pi[1:n,:] \!&\!-\G^b_g[1:n,:] \!&\! \bm 0 \\ \bm{B}_d \G^b_\pi[n+1:n+m,:] \!&\! \G^b_g[n+1:2n,:] \!&\! -\G^b_\tau \!}.
    \end{align*}
\end{corollary}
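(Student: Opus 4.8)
The plan is to follow the same two-step strategy as in the proof of Theorem~\ref{thm:brs}, but to exploit the control-affine structure $\f(\x,\u)=\g(\x)+\bm B_d\u$ so that only the nonlinear term $\g$ is replaced by an HZ envelope. First I would rewrite the exact one-step BRS as
$$\mathcal{P}(\mathcal{T})=\Big\{\x\in\mm{X}\ \Big|\ \mat{\x\\\u}\in\mathcal{H}_\pi,\ \mat{\x\\\w}\in\mathcal{G}_g(\mm{X}),\ \w+\bm B_d\u\in\mathcal{T}\Big\},$$
where $\w$ plays the role of $\g(\x)$, and then replace the exact graph set $\mathcal{G}_g(\mm{X})$ by its HZ envelope $\overline{\mathcal{H}}_g(\mm{X})\supseteq\mathcal{G}_g(\mm{X})$ to obtain the candidate over-approximation
$$\tilde{\mathcal{H}}_p=\Big\{\x\ \Big|\ \mat{\x\\\u}\in\mathcal{H}_\pi,\ \mat{\x\\\w}\in\overline{\mathcal{H}}_g(\mm{X}),\ \w+\bm B_d\u\in\mathcal{T}\Big\}.$$
Because enlarging $\mathcal{G}_g$ to $\overline{\mathcal{H}}_g$ can only enlarge the feasible set, $\tilde{\mathcal{H}}_p\supseteq\mathcal{P}(\mathcal{T})$, so it remains to show $\tilde{\mathcal{H}}_p=\tilde{\mathcal{P}}(\mathcal{T})$ by double inclusion.

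For $\tilde{\mathcal{H}}_p\subseteq\tilde{\mathcal{P}}(\mathcal{T})$, I would take $\x\in\tilde{\mathcal{H}}_p$ and extract witnessing factor pairs $(\bxi^c_\pi,\bxi^b_\pi)\in\mathcal{B}(\A^c_\pi,\A^b_\pi,\bb_\pi)$, $(\bxi^c_g,\bxi^b_g)\in\mathcal{B}(\A^c_g,\A^b_g,\bb_g)$ and $(\bxi^c_\tau,\bxi^b_\tau)\in\mathcal{B}(\A^c_\tau,\A^b_\tau,\bb_\tau)$ for the three memberships. The key observation is that the top $n$ rows of $\overline{\mathcal{H}}_g$ reproduce $\x$ while the bottom $n$ rows reproduce $\w$, and the first $n$ (resp.\ last $m$) rows of $\mathcal{H}_\pi$ reproduce $\x$ (resp.\ $\u$). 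Eliminating $\w$ and $\u$ then yields exactly two coupling identities: equating the $\x$-coordinate produced by $\mathcal{H}_\pi$ with that produced by $\overline{\mathcal{H}}_g$ gives the first block row of $(\A_1,\A_2)$ with right-hand side $\bb_1=\cc_g[1:n,:]-\cc_\pi[1:n,:]$, while writing the target relation $\w+\bm B_d\u=\G^c_\tau\bxi^c_\tau+\G^b_\tau\bxi^b_\tau+\cc_\tau$ gives the second block row with right-hand side $\bb_2=\cc_\tau-\bm B_d\cc_\pi[n+1:n+m,:]-\cc_g[n+1:2n,:]$. Stacking $\bxi^c=[(\bxi^c_\pi)^\top\ (\bxi^c_g)^\top\ (\bxi^c_\tau)^\top]^\top$ and likewise $\bxi^b$, one checks $(\bxi^c,\bxi^b)\in\mathcal{B}(\tilde\A^c_p,\tilde\A^b_p,\tilde\bb_p)$, where the block-diagonal part enforces the three internal constraints, and that the retained $\x$-row of $\mathcal{H}_\pi$ reproduces $\x=\G^c_p\bxi^c+\G^b_p\bxi^b+\cc_p$, so $\x\in\tilde{\mathcal{P}}(\mathcal{T})$.

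For the reverse inclusion I would run the argument backwards: given $\x\in\tilde{\mathcal{P}}(\mathcal{T})$ with witness $(\bxi^c,\bxi^b)\in\mathcal{B}(\tilde\A^c_p,\tilde\A^b_p,\tilde\bb_p)$, partition the factors into the $\pi$-, $g$- and $\tau$-blocks, use the block-diagonal part of $\tilde\A^c_p,\tilde\A^b_p$ to certify the three individual memberships, and then recover $\u$ and $\w$ from the appropriate rows of $\mathcal{H}_\pi$ and $\overline{\mathcal{H}}_g$; the constraint rows encoded by $\A_1,\A_2$ (with $\bb_1,\bb_2$) guarantee the $\x$-consistency and the target relation $\w+\bm B_d\u\in\mathcal{T}$, so $\x\in\tilde{\mathcal{H}}_p$, giving $\tilde{\mathcal{H}}_p=\tilde{\mathcal{P}}(\mathcal{T})$.

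The main obstacle, exactly as in Theorem~\ref{thm:brs}, is purely bookkeeping: keeping straight which row blocks of $\G_g$ encode $\x$ (rows $1{:}n$) versus $\g(\x)$ (rows $n{+}1{:}2n$), and which row blocks of $\G_\pi$ encode $\x$ (rows $1{:}n$) versus $\u$ (rows $n{+}1{:}n{+}m$). The one genuinely new point relative to Theorem~\ref{thm:brs} is that, because $\bm B_d\u$ enters linearly, there is no need to carry the full graph envelope of $\f$—which would introduce the output as separate rows and force a match on all $n+m$ input coordinates $[\x^\top\ \u^\top]^\top$—and one may instead use the smaller envelope of $\g$ and absorb $\bm B_d\u$ directly into the target relation. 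This collapses the two coupling blocks of Theorem~\ref{thm:brs}, which together contribute $2n+m$ constraint rows, into the single pair $(\A_1,\A_2)$ contributing only $2n$ rows, which is precisely the promised reduction of redundant equality constraints. Verifying this collapse, together with the fact that the over-approximation property survives the replacement $\mathcal{G}_g\subseteq\overline{\mathcal{H}}_g$, is the only step requiring care.
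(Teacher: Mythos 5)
Your proposal is correct and follows exactly the route the paper intends: the authors omit the proof of Corollary~\ref{coro:linear_input}, stating only that it "follows the same procedures as the proof of Theorem~\ref{thm:brs}," and your argument is precisely that procedure adapted to the affine-in-control structure, with the coupling rows $(\A_1,\A_2,\bb_1,\bb_2)$ correctly identified as the $\x$-consistency constraint between $\mathcal{H}_\pi$ and $\overline{\mathcal{H}}_g$ plus the target relation $\g(\x)+\bm B_d\u\in\mathcal{T}$. Your accounting of the reduction from $2n+m$ to $2n$ coupling constraints is also accurate and explains the corollary's claimed benefit.
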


The proof of Corollary \ref{coro:linear_input} follows the same procedures as the proof of Theorem \ref{thm:brs}, and is thus omitted.


\subsection{Refinement of Over-approximated Multi-step BRS}
Based on the results of the preceding subsections, 
the $T$-step over-approximations of BRS for \eqref{close-sys} can be computed iteratively as follows:
\begin{equation}\label{equ:t-step}
    \begin{aligned}
    \overline{\mathcal{P}}_0(\mathcal{T}) =  \mathcal{T},\;\overline{\mathcal{P}}_t(\mathcal{T}) 
    = \overline{\mathcal{P}}(\overline{\mathcal{P}}_{t-1}(\mathcal{T})),\; t=1,\dots,T.\\
\end{aligned}
\end{equation}
Since Theorem \ref{thm:brs} computes $\overline{\mm{H}}_f(\mm{X},\mm{U})$ over the entire state-input domain $\mm{X} \times \mm{U}$, a naive application of \eqref{equ:t-step} based on Theorem \ref{thm:brs} may lead to conservative approximations for multi-step computations. 
In order to mitigate the conservativeness, we introduce a refinement procedure aimed at computing $\overline{\mm{H}}_f$ within more tightly constrained prior sets, denoted as $\mm{X}^p \times \mm{U}^p$ where $p$ stands for ``prior''. Such procedure allows the nonlinearity to be approximated over tighter sets, thus leading to less conservative results. Concretely, the refinement procedure employs a recursive approach that utilizes the BRSs obtained from the previous $\overline{\mm{H}}_f$ to compute the new prior sets. 

With the same notations as in Theorem \ref{thm:brs}, the following lemma provides a closed-form expression of an over-approximation of BRS $\overline{\mm{P}}(\mm{T})$ over the prior sets $\mm{X}^p \times \mm{U}^p$. 

\begin{lemma} \label{coro:refine}
Given the state set $\mm{X}$, the input set $\mm{U}$, the target set $\mm{T}$, and the graph set of $\mt{\pi}$, $\mathcal{H}_\pi$, suppose that the prior sets $\mm{X}^p \times \mm{U}^p$ satisfies
    \begin{align} \label{eq:prior_sets}
         \mm{H}_\pi \cap_{\mat{ \mt{I}_n & \mt{0}}}\mm{P}(\mm{T}) \subseteq \mm{X}^p \times \mm{U}^p \subseteq \mm{X} \times \mm{U}.
    \end{align}
    Let  $\overline{\mathcal{H}}_f(\mathcal{X}^p,\mathcal{U}^p) = \hz{fp}$ be the over-approximated graph set of the nonlinear function $\bm f$ over the domain $\mathcal{X}^p\times \mathcal{U}^p$. Then, the one-step BRS of the NFS \eqref{close-sys} can be over-approximated by an HZ whose \emph{HCG}-representation is the same as \eqref{equ:thm_brs} by replacing $\hz{f}$ with $\hz{fp}$.
\end{lemma}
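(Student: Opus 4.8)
The plan is to mirror the proof of Theorem \ref{thm:brs} almost verbatim, isolating the single place where restricting the envelope domain could invalidate the over-approximation guarantee, and then to discharge that place using the left inclusion in \eqref{eq:prior_sets}. First I would introduce the intermediate set
$$\tilde{\mm{H}}_p \triangleq \{\x \mid [\x^\top \ \u^\top]^\top \in \mm{H}_\pi,\ [\x^\top \ \u^\top \ \y^\top]^\top \in \overline{\mm{H}}_f(\mm{X}^p,\mm{U}^p),\ \y \in \mm{T}\},$$
which is the exact analogue of the set $\mm{H}_p$ used in Theorem \ref{thm:brs}, with the full-domain envelope $\overline{\mm{H}}_f(\mm{X},\mm{U})$ replaced by the refined envelope $\overline{\mm{H}}_f(\mm{X}^p,\mm{U}^p)$. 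The algebraic identity equating $\tilde{\mm{H}}_p$ with the HZ in \eqref{equ:thm_brs} (with every $f$-subscript replaced by $fp$) is established by the identical double-inclusion computation already carried out in Theorem \ref{thm:brs}: the concatenation of factor variables $\bxi^c = [(\bxi^c_\pi)^\top \ (\bxi^c_{fp})^\top \ (\bxi^c_\tau)^\top]^\top$, the matching of the shared block $[\x^\top \ \u^\top]^\top$ between the $\pi$- and $f$-generators and of $\y$ between the $f$- and $\tau$-generators, and the projection onto the first $n$ coordinates are all untouched by the domain substitution. This step therefore requires no rework.

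Second, the only genuinely new point is that $\tilde{\mm{H}}_p$ still over-approximates the true BRS $\mm{P}(\mm{T})$; this is precisely where the full-domain argument of Theorem \ref{thm:brs}, namely $\overline{\mm{H}}_f(\mm{X},\mm{U}) \supseteq \mm{G}_f(\mm{X},\mm{U})$, no longer applies directly, because the refined envelope covers only $\mm{G}_f(\mm{X}^p,\mm{U}^p)$. I would prove $\mm{P}(\mm{T}) \subseteq \tilde{\mm{H}}_p$ by taking an arbitrary $\x \in \mm{P}(\mm{T})$ and setting $\u = \mt{\pi}(\x)$ and $\y = \f(\x,\u)$. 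Then $[\x^\top \ \u^\top]^\top \in \mm{H}_\pi$ and its projection onto the first $n$ coordinates is $\x \in \mm{P}(\mm{T})$, so $[\x^\top \ \u^\top]^\top \in \mm{H}_\pi \cap_{\mat{\mt{I}_n & \mt{0}}} \mm{P}(\mm{T})$. By the left inclusion in \eqref{eq:prior_sets} this forces $(\x,\u) \in \mm{X}^p \times \mm{U}^p$, whence $[\x^\top \ \u^\top \ \y^\top]^\top \in \mm{G}_f(\mm{X}^p,\mm{U}^p) \subseteq \overline{\mm{H}}_f(\mm{X}^p,\mm{U}^p)$; combined with $\y \in \mm{T}$, this gives $\x \in \tilde{\mm{H}}_p$.

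Combining the two steps yields $\mm{P}(\mm{T}) \subseteq \tilde{\mm{H}}_p = \langle \cdots \rangle$, which is the claim. I would add the remark that the right inclusion $\mm{X}^p \times \mm{U}^p \subseteq \mm{X} \times \mm{U}$ is not actually required for the over-approximation guarantee; it serves only to keep the envelope domain tight, so that the refined result is no more conservative than the full-domain BRS, which is the purpose of the refinement in the first place.

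The main obstacle is conceptual rather than computational: recognizing that shrinking the envelope domain breaks the one-line over-approximation argument of Theorem \ref{thm:brs}, and that the left inclusion in \eqref{eq:prior_sets} is exactly the hypothesis that repairs it. It guarantees that every state in the true BRS, once paired with its controller output $\mt{\pi}(\x)$, lands inside the domain on which $\overline{\mm{H}}_f$ was recomputed, so no graph point of $\f$ that is relevant to $\mm{P}(\mm{T})$ can escape the refined envelope.
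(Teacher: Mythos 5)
Your proposal is correct and follows essentially the same route as the paper: the paper rewrites $\mathcal{P}(\mathcal{T})$ through a chain of set identities that substitutes $\mathcal{G}_f(\mathcal{X}^p,\mathcal{U}^p)$ for the full-domain graph set (justified exactly by the left inclusion in \eqref{eq:prior_sets}) and then defers to the argument of Theorem \ref{thm:brs}, which is precisely your two-step structure written element-wise. Your closing observation that only the left inclusion is needed for soundness, while the right inclusion only controls tightness, is accurate and consistent with how the paper uses the hypothesis.
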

\begin{proof}
    From \eqref{eq:prior_sets}, we know that $\mm{P}(\mm{T}) \subseteq \mm{X}^p \subseteq \mm{X}$ and $ \mm{U}_{\mm{T}} \subseteq \mm{U}^p \subseteq \mm{U}$, where $\mm{U}_{\mm{T}}$ denotes the projection of $\mm{H}_\pi \cap_{\mat{ \mt{I}_n & \mt{0}}}\mm{P}(\mm{T})$ onto the $\mt{u}$ space.
    By the definition of the one-step BRS, we have 
    \begin{align*}
           \mathcal{P}(\mathcal{T})  &= \{\x \in\mathcal{X}\;|\; \bm f(\x,\u) \in \mathcal{T},\u = \mt{\pi}(\x)\} \\
           &= \{ \mt{x} \in \mm{X}^p \mid  \mt{f}(\mt{x}, \mt{u}) \in \mm{T}, \mt{u}= \mt{\pi}(\mt{x})\} \\ 
           &= \{\x \;|\;  [\mt{x}^\top \ \u^\top\ \mt{y}^\top]^\top\in \mathcal{G}_f(\mathcal{X}^p,\mm{U}^p),\\
           & \ \quad \qquad \mt{u}= \mt{\pi}(\mt{x}), \y\in \mathcal{T}\} \\
           &=\{\x \;|\; [\mt{x}^\top \ \mt{u}^\top]^\top\in \mm{H}_\pi \cap_{\mat{ \mt{I}_n & \mt{0}}}\mm{P}(\mm{T}), \\
           & \ \quad \qquad [\mt{x}^\top \ \u^\top\ \mt{y}^\top]^\top\in \mathcal{G}_f(\mathcal{X}^p,\mm{U}^p), \y\in \mathcal{T}\} \\
           &= \{\x \;|\; [\mt{x}^\top \ \u^\top\ \mt{y}^\top]^\top\in \mathcal{G}_f(\mathcal{X}^p,\mm{U}^p), \\
           & \ \quad \qquad [\mt{x}^\top \ \mt{u}^\top]^\top\in \mm{H}_\pi, \y\in \mathcal{T}\}. 
    \end{align*}
    The remaining proof is the same as that of Theorem \ref{thm:brs}.
\end{proof}

\setlength{\textfloatsep}{8pt}
\begin{algorithm}[h]
    \SetNoFillComment
    \caption{BReachNonlin-HZ}\label{alg:brs_comp}
    \KwIn{State domain $\mathcal{X}$, control input domain $\mm{U}$, target set $\mm{T}$, nonlinear plant model $\mt{f}$, neural network controller $\mt{\pi}$, large scalars $\alpha,\beta >0$, time horizon $T$, the number of refinement epochs $n_r$.}
    \KwOut{Refined over-approximations of $t$-step BRSs from $t=1$ to $T$ $\overline{\mm{P}}_{1:T}(\mm{T})$.}
    \textbf{Initialization}: $\overline{\mm{P}}_0(\mm{T}) \leftarrow \mm{T}$; $\mm{X}^p_{1:T} \leftarrow \mm{X}$;\ $\mm{U}^p_{1:T} \leftarrow \mm{U}$; \label{alg:init}\\

    Compute $\mathcal{H}_\pi$ with $\alpha$ and $\beta$ \tcp*{Alg 1 in \cite{zhang2023backward}} \label{alg:hpi}
    
           
        \For{$r \in \{0,1,\dots,n_r$\}}{
     
        \For{$t \in \{1,2,\dots,T$\}}{ 
        Compute $\overline{\mathcal{H}}_f(\mm{X}^p_t, \mm{U}^p_t)$\tcp*{Section \ref{subsec:nonlin}} \label{alg:hf}
        
        $\overline{\mm{P}}_t(\mm{T}) \leftarrow \text{Pre}(\overline{\mm{P}}_{t-1}(\mm{T}), \overline{\mm{H}}_f(\mm{X}^p_t, \mm{U}^p_t), \mm{H}_\pi)$\tcp*{Over-approximate one-step BRS} \label{alg:pre}

        \If{$n_r==0$ \rm \textbf{or} $r==n_r$}{\textbf{continue};\\ }
        
        $\mm{X}^p_t$, $\mm{U}^p_t$ $\leftarrow$ ${GetBox}(\overline{\mm{P}}_t(\mm{T}), \mm{H}_\pi)$\tcp*{The prior sets for $\mm{P}_t(\mm{T})$.} \label{alg:getbox}
        
        }
    }
        \Return{$\overline{\mm{P}}_{1:T}(\mm{T})$}
\end{algorithm}

Algorithm \ref{alg:brs_comp} summarizes the over-approximation of BRS for the closed-loop system \eqref{close-sys} with the refinement procedure. The prior sets $\mm{X}^p \times \mm{U}^p$ for all the time horizons are initialized as $\mm{X} \times \mm{U}$ (line \ref{alg:init}) and $\mm{H}_\pi$ is computed over $\mm{X}$ with the large scalars $\alpha$ and $\beta$ by using Algorithm 1 in \cite{zhang2023backward} (line \ref{alg:hpi}). For each time horizon $t$, we compute the over-approximation of the nonlinear plant model over the prior set $\mm{X}^p_t$ and $\mm{U}^p_t$ (line \ref{alg:hf}) and then the over-approximated one-step BRS $\overline{\mm{P}}_t(\mm{T})$ is computed by Lemma \ref{coro:refine} (line \ref{alg:pre}). If $n_r=0$ or the refinement epoch reaches the maximum number $n_r$, the refinement procedure will be skipped and the computed BRSs $\overline{\mm{P}}_{t}(\mm{T})$ for $t=1$ to $T$ will be returned; otherwise, the tighter prior sets are computed based on the over-approximated BRSs and then passed to the next refinement epoch (line \ref{alg:getbox}). 
    Concretely, in line \ref{alg:getbox}, the bounding box of $\mm{H}_\pi^{\mm{T}} \triangleq \mm{H}_\pi \cap_{\mat{ \mt{I}_n & \mt{0}}}\overline{\mm{P}}_t(\mm{T})$, is computed by the $GetBox$ and is set to be the prior sets for the refinement procedure. The $GetBox$ only shrinks the bounding intervals of $\mm{X} \times \mm{U}$ over the dimensions contributing to the nonlinearity. Let $\mt{r} = [\mt{x}^\top \ \mt{u}^\top]^\top \in \mm{H}_\pi$. Such shrinking interval, denoted as $\li r_f^{lb}, r_f^{ub} \ri$, can be obtained by solving two Mixed-Integer linear Programs (MILPs): $r_f^{lb} = \min_{\mt{r} \in \mm{H}_\pi^{\mm{T}}}  r_f$, $r_f^{ub} = \max_{\mt{r}\in \mm{H}_\pi^{\mm{T}}} r_f $. 

\begin{remark}
    The number of refinement epochs $n_r$ in Algorithm \ref{alg:brs_comp} represents the trade-off between the set approximation accuracy and the computation efficiency. Specifically, the refinement procedure mitigates the conservativeness of the set approximation but requires computing $\overline{\mm{H}}_f(\mm{X}_t^p,\mm{U}_t^p)$ and the bounding hyper-boxes of HZs at each epoch, which introduces extra computation time. Simulation results for different numbers of epochs are given in Section \ref{sec:ex}.
\end{remark}

\begin{remark}
The over-approximated BRSs can be used for the safety verification of NFSs \cite{bird2023hybrid,zhang2023backward}. Consider an HZ initial state set $\mm{X}_0 \subset \mm{X}$ and an HZ unsafe region $\mm{O} \subset \mm{X}$. Suppose that the $t$-step BRS of $\mm{O}$ is over-approximated as $\overline{\mm{P}}_t(\mm{O})$ via Algorithm \ref{alg:brs_comp} for $t=1,\cdots,T$. 
Then any trajectory that starts from $\mm{X}_0$ will not enter into the unsafe region $\mm{O}$ within $T$ time steps if all the over-approximated BRSs $\overline{\mm{P}}_{1:T}(\mm{O})$ have no intersections with the initial set $\mm{X}_0$. By \cite[Proposition 7]{bird2023hybrid} and \cite[Lemma 1]{zhang2023backward}, verifying the emptiness of the intersection of $\mathcal{X}_0$ and $\overline{\mathcal{P}}_t(\mm{O})$ is equivalent to solve an MILP. 


\end{remark}

   

\section{Neural Feedback System With General Types of Activation Functions} \label{sec:other_activate}
In this section, we extend the results of the preceding section that considers ReLU-activated FNNs to FNNs with more general activation functions.  We categorize the activation functions as either piecewise linear or non-piecewise linear. For FNNs with piecewise linear activation functions, their \emph{exact} graph sets can be constructed, in which case Theorem \ref{thm:brs} and Lemma \ref{coro:refine} are directly applicable. For FNNs with non-piecewise linear activation functions, their graph sets can be over-approximated by using the methods in Section \ref{subsec:nonlin}, based on which the over-approximated BRSs can be computed.

\subsection{Piecewise Linear Activation Functions} \label{subsec:prelu}
The graph of piecewise linear activation functions can be exactly represented by a finite union of polytopes that is equivalent to an HZ. For simplicity, we use the Parametric ReLU (PReLU) activation function as an example to show the construction of the graph set.

The PReLU is a parametric activation function defined as 
$$
\sigma\left(x\right)=\left\{\begin{array}{ll}
x, & \text { if } x>0, \\
a x, & \text { if } x \leq 0.
\end{array} \right.
$$
Here $a\in \mathbb{R}_{\geq 0}$ is a parameter that is learned along with other neural network parameters \cite{he2015delving}. When $a = 0$, it becomes the conventional ReLU; when $a$ is a small positive number, it becomes the Leaky ReLU. 
Similar to the construction of HZ for ReLU in \cite{zhang2023backward}, given an interval domain $\li-\alpha,\beta\ri$, the two line segments of PReLU can be exactly represented as two HZs: 
\begin{align*}
    \mathcal{H}_{1,a} &= \left\langle\!  \mat{\frac{\alpha}{2}\\\frac{\alpha\cdot a}{2}},\emptyset, \mat{\frac{-\alpha}{2}\\\frac{-\alpha\cdot a}{2}},\emptyset,\emptyset,\emptyset \right\rangle, \\
    \mathcal{H}_2 &=  \left\langle  \mat{\frac{\beta}{2}\\\frac{\beta}{2}},\emptyset,\mat{\frac{\beta}{2}\\\frac{\beta}{2}},\emptyset,\emptyset,\emptyset \right\rangle.
\end{align*}
The union of $\mathcal{H}_{1,a}$ and $\mathcal{H}_2$ can be directly computed as a single HZ, $\mathcal{H}_{\sigma}$, using Proposition 1 in \cite{bird2021unions}. Applying Proposition 3 in \cite{zhang2022reachability} we can remove the redundant continuous generators to get
\begin{equation}\label{equ:H_prelu}
    \mathcal{H}_{\sigma} = \mathcal{H}_{1,a} \cup \mathcal{H}_2 = \langle \G^c_{h,a},\allowbreak \G^b_{h,a},\allowbreak \cc_{h},\allowbreak \A^c_{h},\allowbreak \A^b_{h},\allowbreak \bb_{h} \rangle
\end{equation}
where
\begin{align*}
    &  \G^c_{h,a}   =  \mat{-\frac{\alpha}{2} & -\frac{\beta}{2} & 0& 0\\ -\frac{\alpha\cdot a}{2} & -\frac{\beta}{2} & 0& 0}, \G^b_{h,a}  =  \mat{-\frac{\alpha}{2}\\-\frac{\alpha\cdot a}{2}}, \cc_h  =  \mat{\frac{\beta}{2} \\ \frac{\beta}{2}},\\
    & \A^c_h  =\mat{1& 0 & 1 & 0\\0 & 1 & 0 & 1}, \A^b_h  = \mat{1\\-1}, \bb_h  =\mat{1\\1}.
\end{align*}

Since \eqref{equ:H_prelu} exactly represents PReLU over an interval domain, Algorithm 1 in \cite{zhang2023backward} can be directly applied to PReLU-activated FNNs, and based on that, the exact graph set of FNNs $\mm{H}_\pi$ can be computed. Hence, Theorem \ref{thm:brs}, Lemma \ref{coro:refine}, and Algorithm \ref{alg:brs_comp} can be directly employed to over-approximate BRSs for a PReLU-activated NFS \eqref{close-sys}.

\subsection{Non-Piecewise Linear Activation Functions} \label{subsec:nonlin_act}
In addition to piecwise linear activation functions, non-piecewise linear activation functions such as sigmoid and tanh functions, are also widely employed in NNs \cite{DUBEY202292}. Since the graph of non-piecewise linear functions cannot be represented exactly by HZs, we will construct an over-approximation of the graph set of FNNs $\mm{H}_\pi$.

Since an activation function is a scalar function, we use the methods discussed in Section \ref{subsec:nonlin} to construct an HZ $\overline{\mm{H}}_a$ over the interval domain $\li -\alpha, \beta \ri$ such that 
\begin{align} \label{eq:graph_nonlin_act}
    \overline{\mm{H}}_a \supset \mm{H}_a = \left\{ [x \ y]^\top \mid y = \sigma(x), x \in \li -\alpha, \beta \ri\right\}.
\end{align}
The following lemma gives the HZ over-approximation of the graph of vector-valued activation function $\mt{\phi}$ that is constructed by component-wise repetition of the scalar activation function $\sigma$ over a domain $\mm{X}_a$ represented as an HZ.
\begin{lemma} \label{lem:h_pi_over}
    Assume that a domain $\mm{X}_a$ is represented as an HZ such that $\mm{X}_a \subseteq \li -\alpha \mt{1},\beta \mt{1}\ri \subset \mathbb{R}^{n_k}$. Let $\mm{G}(\mt{\phi},\mm{X}_a)$ be the graph of the vector-valued activation function $\mt{\phi}:\mathbb{R}^{n_k} \rightarrow \mathbb{R}^{n_k}$, i.e., $\mm{G}(\mt{\phi},\mm{X}_a) = \left \{ [\mt{x}^\top \ \mt{y}^\top]^\top \mid \mt{y} = \mt{\phi}(\mt{x}), \mt{x} \in \mm{X}_a \right\}$. Then $\mm{G}(\mt{\phi}, \mm{X}_a)$ can be over-approximated by an HZ, denoted as $\overline{\mm{G}}(\mt{\phi}, \mm{X}_a)$,  where
    \begin{align} \label{eq:h_pi_over}
        \overline{\mm{G}}(\mt{\phi}, \mm{X}_a) = \left( \mt{P} \overline{\mm{H}}_a^{n_k} \right)\cap_{\mat{\mt{I}_{n_k} & \mt{0}}} \mm{X}_a,
    \end{align}
    $\bm{P}=[\bm{e}_{2}\; \bm{e}_{4}\;\cdots\; \bm{e}_{2n_{k}} \; \bm{e}_{1}\; \bm{e}_{3}\;\cdots\; \bm{e}_{2n_{k}-1}]^T\in \mathbb{R}^{2n_{k}\times 2n_{k}}$ is a permutation matrix, and $\overline{\mm{H}}_{a}$ is given in \eqref{eq:graph_nonlin_act}.
\end{lemma}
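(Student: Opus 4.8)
The plan is to establish two facts that together prove the lemma: that the right-hand side of \eqref{eq:h_pi_over} is a well-defined HZ, and that it contains the true graph $\mm{G}(\mt{\phi}, \mm{X}_a)$. The first is a closure argument that chains together the HZ-preserving set operations recalled in Section \ref{sec:zono}; the second is a direct membership chase that lifts the scalar over-approximation \eqref{eq:graph_nonlin_act} to the vector-valued activation $\mt{\phi}$ component by component. Since ``over-approximation'' only requires the inclusion $\overline{\mm{G}}(\mt{\phi},\mm{X}_a) \supseteq \mm{G}(\mt{\phi},\mm{X}_a)$, I do not need to characterize the reverse inclusion, nor to expand the explicit HCG-representation.

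For the closure claim, I would reason as follows. By construction (Section \ref{subsec:nonlin}), $\overline{\mm{H}}_a$ is an HZ in $\mathbb{R}^2$. Its $n_k$-ary Cartesian power $\overline{\mm{H}}_a^{n_k} \subset \mathbb{R}^{2n_k}$ is then an HZ by repeated application of the Cartesian-product identity \cite[Proposition 3.2.5]{bird2022hybrid}. Applying the linear map $\mt{P}$ and then taking the generalized intersection with the HZ $\mm{X}_a$ under $[\mt{I}_{n_k}\ \mt{0}]$ both preserve the HZ representation by \cite[Proposition 7]{bird2023hybrid}; the dimensions are compatible since $[\mt{I}_{n_k}\ \mt{0}]$ maps $\mathbb{R}^{2n_k}$ into $\mathbb{R}^{n_k}$ and $\mm{X}_a \subset \mathbb{R}^{n_k}$. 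Hence $\overline{\mm{G}}(\mt{\phi}, \mm{X}_a)$ is an HZ whose representation follows by composing the three cited identities.

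For the containment, take an arbitrary $[\mt{x}^\top\ \mt{y}^\top]^\top \in \mm{G}(\mt{\phi}, \mm{X}_a)$, so that $\mt{x} \in \mm{X}_a$ and $y_i = \sigma(x_i)$ for each $i=1,\dots,n_k$. Since $\mm{X}_a \subseteq \li -\alpha\mt{1}, \beta\mt{1}\ri$, every component satisfies $x_i \in \li -\alpha, \beta\ri$, so the scalar pair $[x_i\ y_i]^\top$ lies in $\mm{H}_a \subseteq \overline{\mm{H}}_a$ by \eqref{eq:graph_nonlin_act}. Stacking the $n_k$ pairs shows the interleaved point $[x_1\ y_1\ \cdots\ x_{n_k}\ y_{n_k}]^\top$ belongs to $\overline{\mm{H}}_a^{n_k}$. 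The permutation $\mt{P}$ then regroups the interleaved coordinates into the block ordering used by the graph set, producing a point of $\mt{P}\,\overline{\mm{H}}_a^{n_k}$ whose input block equals $\mt{x}$ and whose output block equals $\mt{y}$. Finally, because $[\mt{I}_{n_k}\ \mt{0}]$ extracts exactly the input block and $\mt{x} \in \mm{X}_a$ by hypothesis, the generalized-intersection constraint is satisfied, so $[\mt{x}^\top\ \mt{y}^\top]^\top \in \overline{\mm{G}}(\mt{\phi}, \mm{X}_a)$. As the point was arbitrary, the desired over-approximation follows.

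I expect the only delicate step to be the index bookkeeping for the permutation $\mt{P}$: one must verify that $\mt{P}$ carries the interleaved ordering $(x_1, y_1, \ldots, x_{n_k}, y_{n_k})$ of the Cartesian power to the block ordering that aligns the selection matrix $[\mt{I}_{n_k}\ \mt{0}]$ with the input coordinates $\mt{x}$, so that the subsequent generalized intersection constrains the \emph{inputs} (and not the over-approximated outputs) to lie in $\mm{X}_a$. Confirming that $\mt{P}$ is genuinely a permutation of $\{1,\dots,2n_k\}$ and that it maps the input half to the correct block is the crux; everything else is routine closure under the cited HZ identities together with the component-wise application of the scalar bound \eqref{eq:graph_nonlin_act}.
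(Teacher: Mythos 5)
Your proof is correct and follows essentially the same route as the paper's: both reduce the claim to closure of HZs under Cartesian powers, linear maps, and generalized intersection, together with the containment of the activation graph in $\mt{P}\,\overline{\mm{H}}_a^{n_k}$ obtained by applying the scalar bound \eqref{eq:graph_nonlin_act} component by component. The only cosmetic difference is that you argue the containment by a pointwise membership chase, whereas the paper states the set identity $\mm{G}(\mt{\phi},\li -\alpha\mt{1},\beta\mt{1}\ri)=\mt{P}\mm{H}_a^{n_k}$ over the full box and then intersects with $\mm{X}_a$; the content, including the index bookkeeping for $\mt{P}$ that you rightly flag as the crux, is identical.
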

\begin{proof}
    For simplicity, denote $\mm{I}$ as $\li -\alpha \mt{1}, \beta \mt{1}\ri$. Since $\mm{X}_a \subseteq \mm{I}$, we have $\mm{G}(\mt{\phi}, \mm{X}_a) = \mm{G}(\mt{\phi}, \mm{I}) \cap_{\mat{\mt{I}_{n_k} & \mt{0}}}\mm{X}_a$. Then the proof of \eqref{eq:h_pi_over} is equivalent to showing that $\mm{G}(\mt{\phi}, \mm{I}) \subseteq  \mt{P} \overline{\mm{H}}_a^{n_k}$. Since the vector-valued activation function $\mt{\phi}$ is constructed by component-wise repetition of $\sigma$ and the permutation matrix $\mt{P}$ only rearranges the order of the state from $ [x_1 \ y_1 \ x_2 \ y_2 \ \cdots \ x_{n_k} \ y_{n_k}]^\top$ to $[x_1 \ \cdots \ x_{n_k} \ y_1 \ \cdots y_{n_k}]^\top$, we can easily check that $\mt{P}\mm{H}_a^{n_k} \subseteq \mm{G}(\mt{\phi}, \mm{I})$.  Hence $\mm{G}(\mt{\phi}, \mm{I}) = \mt{P}\mm{H}_a^{n_k} \subset \mt{P} \overline{\mm{H}}_a^{n_k}$ which implies $\overline{\mm{G}}(\mt{\phi}, \mm{X}_a) \supset {\mm{G}}(\mt{\phi}, \mm{X}_a)$. 
    Finally, since HZs are closed under the linear map and generalized intersection \cite[Proposition 7]{bird2023hybrid}, and both $\overline{\mm{H}}_a^{n_k}$ and $\mm{X}_a$ are HZs, we concludes that $\overline{\mm{G}}(\mt{\phi}, \mm{X}_a)$ is also an HZ, which completes the proof.
\end{proof}

Lemma \ref{lem:h_pi_over} provides the transformation from the scalar-valued nonlinear activation function $\sigma$ that is over-approximated based on Section \ref{subsec:nonlin} to vector-valued activation function $\mt{\phi}$, which is more favorable for layer-by-layer operations when constructing the graph set of FNNs. 

Denote the output set of Algorithm 1 in \cite{zhang2023backward} by replacing $\mm{H}$ with $\overline{\mm{H}}_a$ as $\mm{G}_{\mm{H}_\pi}$. The following theorem shows that $\mm{G}_{\mm{H}_\pi}$ is an over-approximation of the graph set of FNN $\mt{\pi}$ over the HZ-represented domain set $\mm{X}$.


\begin{theorem} \label{thm:hpi_over}
    Given an $\ell$-layer FNN $\mt{\pi}:\mathbb{R}^n \rightarrow \mathbb{R}^m$ with non-piecewise linear activation functions and the domain set represented as an HZ $\mathcal{X}\subset\mathbb{R}^n$, the output set $\mm{G}_{\mm{H}_\pi}$ is an over-approximation of the graph set of FNN $\mt{\pi}$, i.e., $\mm{G}_{\mm{H}_\pi}(\mm{X}, \mt{\pi}) = \overline{\mm{H}}_\pi \supset \mm{H}_\pi$.
\end{theorem}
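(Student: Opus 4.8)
The plan is to argue by induction on the layer index $k$, comparing the exact partial graph set produced by a hypothetical exact construction against the set computed by Algorithm 1 in \cite{zhang2023backward} when each scalar-activation graph is replaced by its over-approximation $\overline{\mm{H}}_a$ from \eqref{eq:graph_nonlin_act}. For each $k \in \{0,1,\dots,\ell-1\}$, let $\mm{G}^{(k)}$ denote the exact graph relating the network input $\x$ to the $k$-th layer neurons $\x^{(k)}$, i.e. $\mm{G}^{(k)} = \{[\x^\top\ (\x^{(k)})^\top]^\top \mid \x \in \mm{X}\}$, and let $\overline{\mm{G}}^{(k)}$ denote the corresponding set maintained by the algorithm. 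The claim to establish is $\overline{\mm{G}}^{(k)} \supseteq \mm{G}^{(k)}$ for every $k$; reading off the input coordinates together with the final affine output at layer $\ell$ then yields $\overline{\mm{H}}_\pi \supseteq \mm{H}_\pi$.

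First I would handle the base case $k=0$, where both sets equal $\{[\x^\top\ \x^\top]^\top \mid \x \in \mm{X}\}$, so the inclusion holds with equality. For the inductive step I would split one layer update into two pieces: (i) the affine pre-activation map $\x^{(k-1)} \mapsto \bm W^{(k-1)}\x^{(k-1)} + \bm v^{(k-1)}$, which the algorithm realizes \emph{exactly} through a linear map of the HZ followed by a translation, and (ii) the component-wise activation, for which the algorithm substitutes the exact vector-valued activation graph $\mm{G}(\mt{\phi},\cdot)$ by its over-approximation $\overline{\mm{G}}(\mt{\phi},\cdot)$ supplied by Lemma \ref{lem:h_pi_over}. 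The crucial structural fact is that every HZ operation appearing in the construction --- linear maps, Minkowski sums, Cartesian products/powers, and generalized intersections --- is monotone under set inclusion. Chaining monotonicity with the inductive hypothesis $\overline{\mm{G}}^{(k-1)} \supseteq \mm{G}^{(k-1)}$ and with the enclosure $\overline{\mm{G}}(\mt{\phi},\cdot) \supseteq \mm{G}(\mt{\phi},\cdot)$ from Lemma \ref{lem:h_pi_over} then propagates the superset relation to layer $k$.

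A subtlety I would treat with care is that Lemma \ref{lem:h_pi_over} encloses the activation graph only over a box domain $\li -\alpha\mt{1},\beta\mt{1}\ri$, whereas the true pre-activations at layer $k$ occupy the generally non-box projection of $\overline{\mm{G}}^{(k-1)}$ onto the pre-activation coordinates. I would resolve this by invoking the standing assumption that $\alpha,\beta$ are chosen large enough to bound all pre-activations, so that this projection is contained in $\li -\alpha\mt{1},\beta\mt{1}\ri$; the intersection $\cap_{[\mt{I}_{n_k}\ \mt{0}]}$ in \eqref{eq:h_pi_over} then restricts the box-domain enclosure back to the actual pre-activation set without breaking the superset property. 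The final layer applies only the affine map $\bm W^{(\ell-1)}\x^{(\ell-1)} + \bm v^{(\ell-1)}$, which is exact, so no further over-approximation is introduced there.

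The hard part will be the bookkeeping rather than any single deep step: one must verify that each operation invoked by Algorithm 1 at layer $k$ matches precisely the monotone HZ operation it is claimed to implement, and that the coordinate carrying the original input $\x$ is faithfully propagated alongside the evolving neuron coordinates through the permutation $\mt{P}$ and the successive intersections. Once this per-layer correspondence is fixed, the monotonicity-plus-induction argument closes the proof.
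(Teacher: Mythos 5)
Your proposal is correct and follows essentially the same route as the paper: a layer-by-layer induction through the network in which the affine pre-activation map is handled exactly and the activation step is enclosed via Lemma \ref{lem:h_pi_over}, with the large $\alpha,\beta$ guaranteeing the box domain covers all pre-activations. The only difference is stylistic --- the paper chases a single element $\mt{x}^{(k-1)}$ through the three operations in \eqref{equ:klayer}, whereas you track the joint input--neuron graph and invoke monotonicity of the HZ operations, which makes the preservation of the input--output correlation (needed for the graph-set claim and left implicit in the paper's element-wise argument) slightly more explicit.
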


\begin{proof}
    The proof boils down to showing that the output set $\mm{X}^{(k)}$ computed by Algorithm 1 in \cite{zhang2023backward} bounds all the possible outputs of the $k$-th layer of the FNN $\pi$, given the input set from last layer as $\mm{X}^{(k-1)}$, $k=1,2,\cdots,\ell - 1$.

    By employing the identical steps outlined in Algorithm 1 of \cite{zhang2023backward}, three set operations are used to calculate the output set for the $k$-th layer:
     \begin{subequations}\label{equ:klayer}
         \begin{align}
          \mathcal{Z}^{(k-1)}&\!\leftarrow\!\bm{W}^{(k-1)}\mathcal{X}^{(k-1)}\!+\!\bm{v}^{(k-1)},  \label{equ:klayer_1}\\
        \mathcal{G}^{(k)}&\leftarrow(\bm{P}\cdot\overline{\mm{H}}_a^{n_{k}})\cap_{[\bm{I}_{n_k}\;\bm{0}]} \mathcal{Z}^{(k-1)},   \label{equ:klayer_2}\\
        \mathcal{X}^{(k)}&\leftarrow[\bm{0}\;\bm{I}_{n_k}] \cdot \mathcal{G}^{(k)}.      \label{equ:klayer_3}
         \end{align}
     \end{subequations}
     For any $\mt{x}^{(k-1)}  \in  \mm{X}^{(k-1)}$, let $\mt{z}^{(k-1)}$ be $\bm{W}^{(k-1)}\mt{x}^{(k-1)}\!+\!\bm{v}^{(k-1)}$. From \eqref{equ:klayer_1}, we have $\mt{z}^{(k-1)} \in \mm{Z}^{(k-1)}$. In terms of \eqref{equ:klayer_2} and Lemma \ref{lem:h_pi_over}, $[{\mt{z}^{(k-1)}}^\top \ {\mt{\phi}(\mt{z}^{(k-1)})}^\top]^\top \in \mm{G}^{(k)}$. From \eqref{equ:klayer_3}, $\mm{X}^{(k)}$ is the projection of $\mm{G}^{(k)}$ onto $\mt{\phi}(\mt{z}^{(k-1)})$ space, which implies that $\mt{x}^{(k)} \in \mm{X}^{(k)}$ by \eqref{equ:NN}. 
\end{proof}

Theorem \ref{thm:hpi_over} extends our previous work \cite{zhang2023backward} from the exact graph set of ReLU-activated FNN to the over-approximation of the graph set of FNN with more general activation functions. Let  $\overline{\mm{H}}_\pi = \overhz{{\pi}}$ be the over-approximated graph set of FNN $\mt{\pi}$ over the domain $\mathcal{X}$ using Theorem \ref{thm:hpi_over}, i.e., $\overline{\mm{H}}_\pi \supset \mm{H}_\pi$. With the same notations in Lemma \ref{coro:refine}, the following theorem provides the closed-form expression of an over-approximation of the one-step BRS for NFS \eqref{close-sys} with non-piecewise activation functions.
\begin{theorem} \label{thm:BRS_over_pi}
   Given the state set $\mm{X}$, the input set $\mm{U}$, the target set $\mm{T}$, the prior sets $\mm{X}^p \times \mm{U}^p$, and the over-approximated graph set of $\bm f$ given as  $\overline{\mathcal{H}}_f(\mathcal{X}^p,\mathcal{U}^p) = \hz{fp}$, 
  the one-step BRS of the NFS \eqref{close-sys} can be over-approximated by an HZ whose \emph{HCG}-representation is the same as \eqref{equ:thm_brs} by replacing $\hz{f}$ and $\hz{\pi}$ with $\hz{fp}$ and $\overhz{\pi}$, respectively.

\end{theorem}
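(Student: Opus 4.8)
The plan is to recognize that this theorem is a synthesis of the three preceding results---Theorem \ref{thm:brs}, Lemma \ref{coro:refine}, and Theorem \ref{thm:hpi_over}---and that its proof needs essentially no new algebra, only a monotonicity argument. First I would introduce the candidate set
\[
\mm{H}_p = \bigl\{\x \mid [\x^\top\ \u^\top]^\top\in \overline{\mm{H}}_\pi,\ [\x^\top\ \u^\top\ \y^\top]^\top\in \overline{\mm{H}}_f(\mm{X}^p,\mm{U}^p),\ \y\in\mm{T}\bigr\},
\]
built from the \emph{over-approximated} graph set $\overline{\mm{H}}_\pi$ (from Theorem \ref{thm:hpi_over}) and the envelope $\overline{\mm{H}}_f(\mm{X}^p,\mm{U}^p)$ computed over the prior sets. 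The goal is to show both that $\mm{H}_p \supseteq \mm{P}(\mm{T})$ and that $\mm{H}_p$ coincides with the HZ whose HCG-representation is \eqref{equ:thm_brs} under the stated substitutions.

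For the outer-bound inclusion, I would invoke the two nesting relations $\overline{\mm{H}}_\pi \supset \mm{H}_\pi$ (Theorem \ref{thm:hpi_over}) and $\overline{\mm{H}}_f(\mm{X}^p,\mm{U}^p) \supseteq \mm{G}_f(\mm{X}^p,\mm{U}^p)$ (Section \ref{subsec:nonlin}). Exactly as in Lemma \ref{coro:refine}, the prior-set condition \eqref{eq:prior_sets}---now read with $\overline{\mm{H}}_\pi$ in place of $\mm{H}_\pi$---guarantees that every $\x\in\mm{P}(\mm{T})$, together with its true control $\u=\mt{\pi}(\x)$ and image $\y=\f(\x,\u)$, satisfies $[\x^\top\ \u^\top\ \y^\top]^\top\in\mm{G}_f(\mm{X}^p,\mm{U}^p)$, so the BRS defined over $\mm{X}\times\mm{U}$ can be rewritten identically over the prior domain. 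Such an $\x$ then obeys $[\x^\top\ \u^\top]^\top\in\mm{H}_\pi\subseteq\overline{\mm{H}}_\pi$ and $[\x^\top\ \u^\top\ \y^\top]^\top\in\mm{G}_f\subseteq\overline{\mm{H}}_f$, placing it in $\mm{H}_p$; hence $\mm{H}_p\supseteq\mm{P}(\mm{T})$.

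The set-equality step $\mm{H}_p=\overline{\mm{P}}(\mm{T})$, where $\overline{\mm{P}}(\mm{T})$ denotes the HZ with HCG-representation \eqref{equ:thm_brs} under the substitutions, is where I would reuse the two-directional containment argument of Theorem \ref{thm:brs} verbatim. That argument only manipulates the HCG matrices of the three constituent HZs---extracting rows by the $\bm{I}_n$ and $\bm{I}_{n+m}$ projections, matching the shared $[\x^\top\ \u^\top]^\top$ and $\y$ blocks across \eqref{equ:allsub}, and stacking the factors $\bxi^c,\bxi^b$---and nowhere uses that $\mm{H}_\pi$ is the \emph{exact} graph or that the envelope is taken over $\mm{X}\times\mm{U}$ rather than $\mm{X}^p\times\mm{U}^p$. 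Consequently, substituting $\hz{fp}$ for $\hz{f}$ and $\overhz{\pi}$ for $\hz{\pi}$ leaves every matrix identity intact, and both inclusions carry over with the barred/prior generators.

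The only genuinely new ingredient---and the step I expect to demand the most care---is the monotonicity justification that relaxing the \emph{exact} equality $\u=\mt{\pi}(\x)$ (encoded by $\mm{H}_\pi$) to membership in the looser set $\overline{\mm{H}}_\pi$ still produces a valid outer bound on the true BRS. The subtlety is that $\mm{P}(\mm{T})$ is defined through the exact controller, so one must confirm that enlarging the graph set can only add states to $\mm{H}_p$ and never discard a genuine BRS point; this follows at once from $\mm{H}_\pi\subseteq\overline{\mm{H}}_\pi$, but it is the one place where the reasoning departs conceptually from the exact case of Theorem \ref{thm:brs} and Lemma \ref{coro:refine}.
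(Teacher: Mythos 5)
Your proposal is correct and follows essentially the same route as the paper: the paper's proof likewise rewrites $\mathcal{P}(\mathcal{T})$ over the prior domain via Lemma \ref{coro:refine}, enlarges the constraint $[\x^\top\ \u^\top]^\top\in\mathcal{H}_\pi$ to membership in $\overline{\mathcal{H}}_\pi$ (the monotonicity step you flag), and then defers the HCG-level set equality to the two-directional containment argument of Theorem \ref{thm:brs}. Your write-up is merely more explicit about the outer-bound inclusion and the reuse of the matrix identities, which the paper compresses into a single displayed inclusion plus a one-line reference.
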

\begin{proof}
    From the proof of Lemma \ref{coro:refine} we have 
    \begin{align*}
        \mathcal{P}(\mathcal{T}) &= \{\x \;|\; [\mt{x}^\top \ \u^\top\ \mt{y}^\top]^\top\in \mathcal{G}_f(\mathcal{X}^p,\mm{U}^p), \\
        & \ \quad \qquad [\mt{x}^\top \ \mt{u}^\top]^\top\in \mm{H}_\pi, \y\in \mathcal{T}\} \\
        &\subset \{\x \;|\; [\mt{x}^\top \ \u^\top\ \mt{y}^\top]^\top\in \mathcal{G}_f(\mathcal{X}^p,\mm{U}^p), \\
        & \ \quad \qquad [\mt{x}^\top \ \mt{u}^\top]^\top\in \overline{\mm{H}}_\pi, \y\in \mathcal{T}\}. 
    \end{align*}
    The remaining proof is the same as that of Theorem \ref{thm:brs}.
\end{proof}


By using Theorem \ref{thm:BRS_over_pi} to over-approximate the one-step BRS in line \ref{alg:pre} of Algorithm \ref{alg:brs_comp}, the multi-step over-approximated BRSs for NFS \eqref{close-sys} with non-piecewise linear activation functions can be computed.

\section{Simulation Results} \label{sec:ex}
In this section, we use two simulation examples to illustrate the performance of the proposed method. The considered  NFS consists of a Duffing Oscillator and an FNN controller with either ReLU (Example \ref{eg:relu_duffing}) or tanh (Example \ref{eg:tanh_duffing}) activation functions. 
The simulation examples are implemented in MATLAB R2022a and executed on a desktop with an Intel Core i9-12900k CPU and 32GB of RAM. 
\begin{figure}[!t] 
\centering
 \begin{subfigure}{0.45\textwidth}
     \centering
     \includegraphics[width=0.97\textwidth]{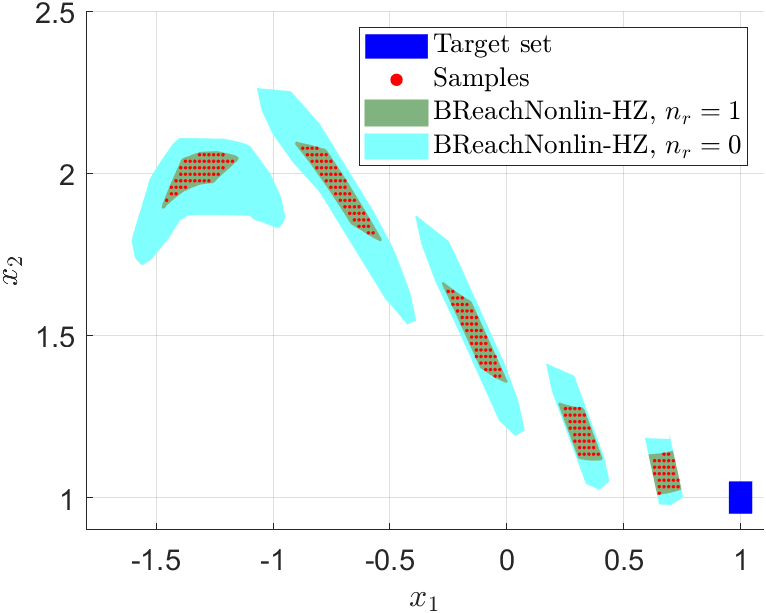}
     \caption{Over-approximated BRSs based on SOS.}
 \end{subfigure}
 \vskip 1mm
 \begin{subfigure}{0.45\textwidth}
     \centering
     \includegraphics[width=0.97\textwidth]{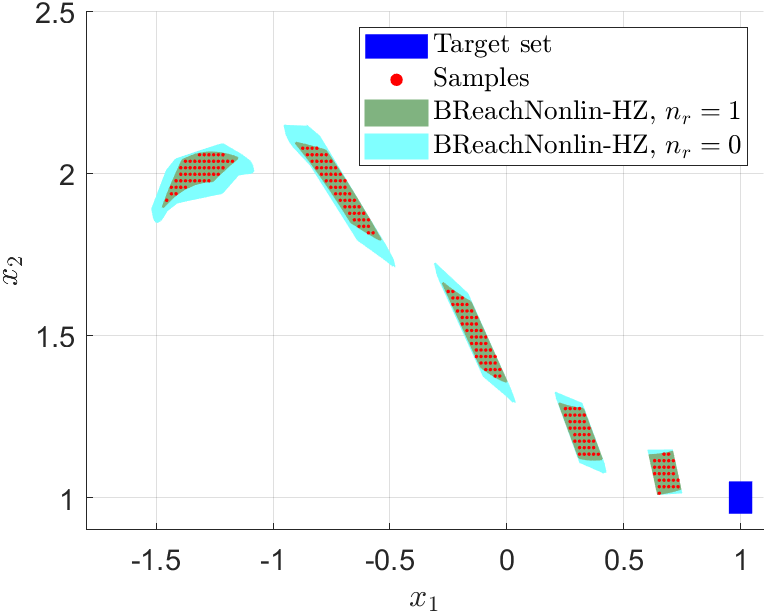}
     \caption{Over-approximated BRSs based on OVERT.}
 \end{subfigure}
\caption{Over-approximated BRSs without refinement (cyan), the refined over-approximated BRSs (dark green), and samples (red) from the true BRSs for Example \ref{eg:relu_duffing}. }
\label{fig:BRS_duff}
\end{figure}

\begin{figure*}[!t] 
\centering
\begin{subfigure}[t]{0.3\textwidth}
  \includegraphics[width=\linewidth]{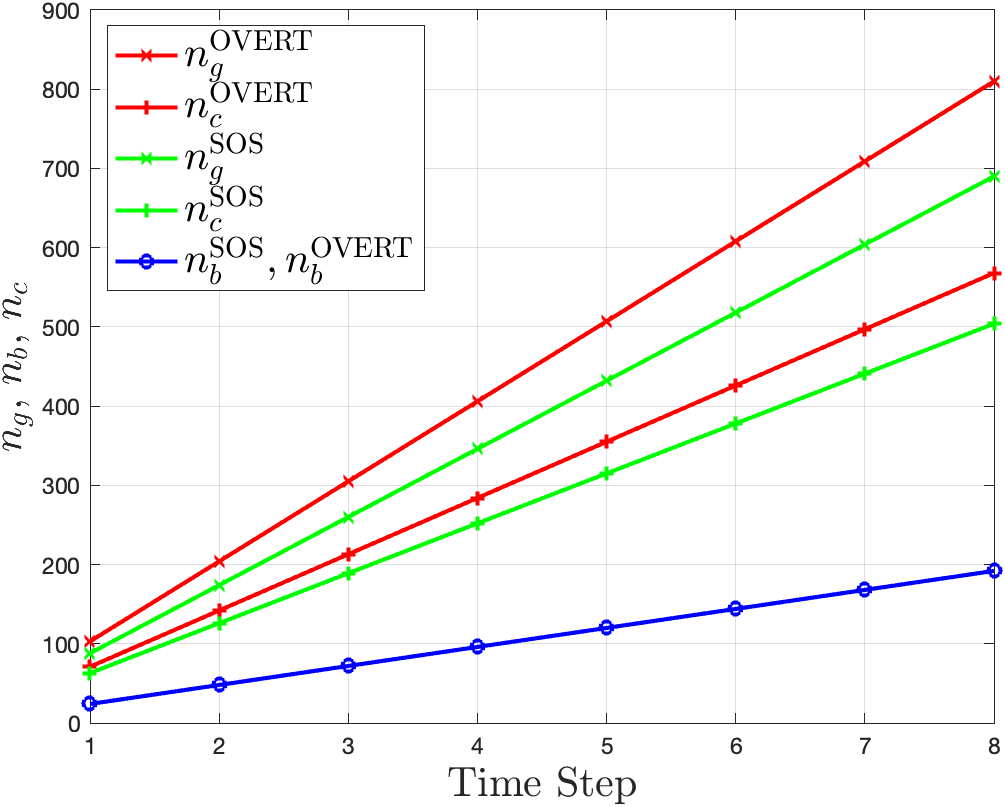}
  \caption{Set complexities.}
   \label{fig:set_complexity}
\end{subfigure}  \qquad
\begin{subfigure}[t]{0.3\textwidth}
      \includegraphics[width=\linewidth]{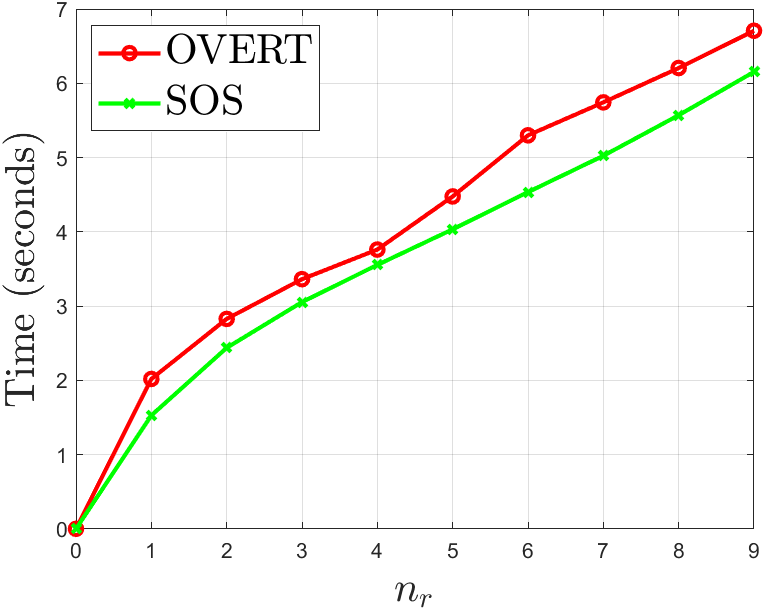}
      \caption{Computation time.}
  \label{fig:time_nr}
\end{subfigure} \qquad
\begin{subfigure}[t]{0.3\textwidth}
    \includegraphics[width=\linewidth]{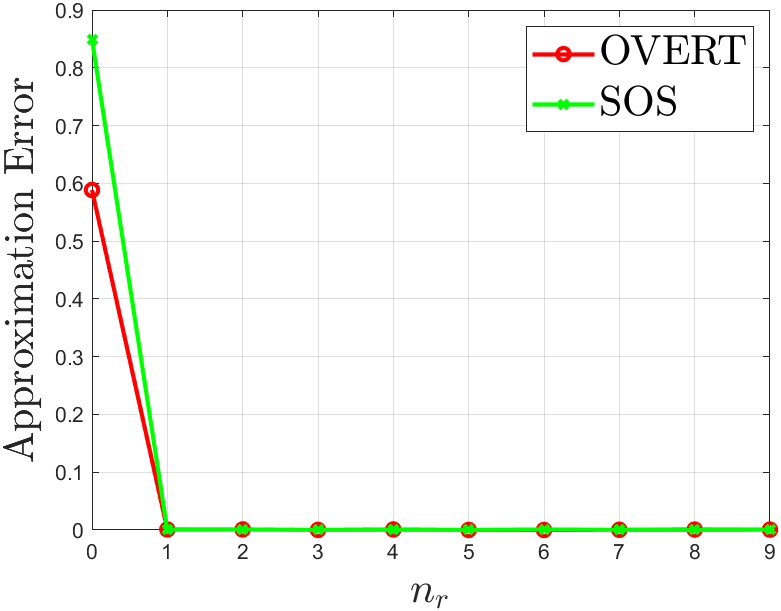}
    \caption{Approximation error.}
    \label{fig:err_nr}
\end{subfigure}
\caption{Comparison of the SOS and OVERT approximation methods at each refinement epoch $n_r$ for Example \ref{eg:relu_duffing}.} 
\label{fig:duffing_stat}
\vspace{-6mm}
\end{figure*}

\begin{example} \label{eg:relu_duffing}
Consider the following discrete-time Duffing Oscillator model from \cite{dang2012reachability}:
\begin{align*}
    x_1(t+1) & = x_1(t) + 0.3x_2(t), \\
    x_2(t+1) &= 0.3x_1(t) + 0.82x_2(t) - 0.3x_1(t)^3 + 0.3u(t),
\end{align*}
where $x_1$, $x_2 \in \mathbb{R}$ are the states confined in the state set $\mm{X} = \li -2,1.1 \ri \times \li -2,3 \ri$ and $u \in \mm{U} = \li 0,5\ri$ is the control input. The controller $u$ is a two-layer ReLU-activated FNN with 10-5 hidden neurons that is trained to learn the control policy given in \cite{dang2012reachability}. The exact BRSs are obtained by selecting samples from which trajectories can reach the target set within $T$ time steps; these samples are chosen from uniformly distributed grids over $\mm{X}$. To the best of our knowledge, there is no existing set-propagation-based method that can compute the over-approximated BRS for the system shown above.
    
Given the target set $\mm{T} = \li 0.95,1.05\ri\times \li 0.95,1.05\ri$, Algorithm \ref{alg:brs_comp} and Theorem \ref{thm:brs} can be employed to over-approximate the BRSs since the $\mm{X}$, $\mm{U}$ and $\mm{T}$ are HZs, satisfying all the conditions of Theorem \ref{thm:brs}. Note that we use 10 breakpoints to over-approximate the nonlinear term $\mt{x}_1^3$ via SOS and OVERT and then $\overline{\mm{H}}_f(\mm{X}_t^p, \mm{U}_t^p)$ is constructed by following a similar procedure as the example in \cite{siefert2023successor}. Fig. \ref{fig:BRS_duff} shows the over-approximated and exact BRSs, indicating that the BRSs computed by Algorithm \ref{alg:brs_comp} over-approximates the exact BRSs, as shown in Theorem \ref{thm:brs}, and the conservativeness is greatly mitigated by the refinement procedure.  In addition, the computed BRSs by using OVERT are less conservative than those by using SOS.



Fig. \ref{fig:set_complexity} shows the set complexity of the $t$-step BRS via SOS and OVERT for $t=1,2,\cdots,8$. Note that the BRSs and refined BRSs have the same set complexity. The number of generators and equality constraints has a linear growth rate with respect to time steps. In addition, the number of binary generators of BRSs is identical due to the same number of breakpoints used in the OVERT method and the SOS method. 

Fig. \ref{fig:time_nr} and Fig. \ref{fig:err_nr} show the computational time and the approximation error for five-step BRS with different numbers of refinement epochs $n_r$.
The approximation error is computed by the ratio between the volumes of the over-approximated BRS and exact BRS as $\text{error} = \frac{V_{\rm over} - V_{\rm exact}}{V_{\rm over}}$, where $V_{\rm over}$ and $V_{\rm exact}$ are the volume of over-approximation of last-step BRS and exact BRS respectively, which are approximated by the Monte Carlo method.
It can be observed that the computation time increases almost linearly when $n_r \geq 1$ because the refinement procedure does not affect the set complexity of the computed BRSs, and in addition, the approximation error is converged after just one epoch in this example.

\end{example}

\begin{example} \label{eg:tanh_duffing}
\begin{figure}[!ht]
\centering
\includegraphics[width=0.97\linewidth]{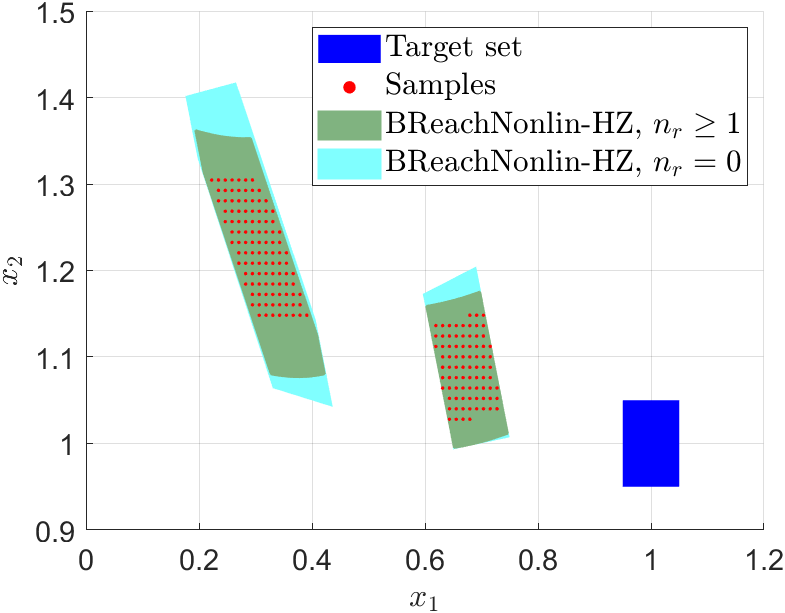}
\caption{Over-approximation of BRSs for Example \ref{eg:tanh_duffing}. The approximation error cannot be further reduced when $n_r > 1$.}
\label{fig:duf_tanh}
\end{figure}
Consider the same Duffing Oscillator model in Example \ref{eg:relu_duffing}. Instead of the ReLU-activated FNN controller, we train a tanh-activated FNN with 5-5 hidden neurons to learn the same control policy given in \cite{dang2012reachability}. We use OVERT with 7 breakpoints to construct the over-approximation of the graph set of FNN $\overline{\mm{H}}_\pi$ based on Theorem \ref{thm:hpi_over}, and then compute the over-approximated BRSs with the same configurations as those in Example \ref{eg:relu_duffing} based on Theorem \ref{thm:BRS_over_pi}. The simulation results are shown in Figure \ref{fig:duf_tanh}. It is observed that all the samples are enclosed by the over-approximated BRSs, consistent with Theorem \ref{thm:BRS_over_pi}.
\end{example}

\section{Conclusion}\label{sec:concl}
In this paper, we proposed a novel HZ-based approach to over-approximate BRSs of NFSs with nonlinear plant models. For NFSs with ReLU activation functions, by combining techniques that over-approximate nonlinear functions with the exact graph set of ReLU-activated FNNs, we showed the closed-form of over-approximated BRSs and a refinement procedure to reduce the conservativeness. In addition, we extended the results to NFSs with piecewise and non-piecewise linear activation functions. The performance of the proposed approach was evaluated using two numerical examples.

\bibliographystyle{IEEEtran}
\bibliography{ref}

\end{document}